\documentclass[oneside,english]{amsart}
\usepackage[T1]{fontenc}
\usepackage[latin9]{inputenc}
\usepackage{amsthm}
\usepackage{amstext}
\usepackage{amssymb}
\usepackage{esint}
\usepackage{amscd}
\usepackage{color}
\makeatletter
\numberwithin{equation}{section}
\numberwithin{figure}{section}
\theoremstyle{plain}
\newtheorem{thm}{\protect\theoremname}[section]

\theoremstyle{plain}
\theoremstyle{definition}

\theoremstyle{plain}
\newtheorem{lem}[thm]{\protect\lemmaname}
\newtheorem{cor}[thm]{\protect\corollaryname}
\theoremstyle{plain}

\theoremstyle{plain}
\makeatother

\usepackage{babel}
\providecommand{\definitionname}{Definition}
\providecommand{\lemmaname}{Lemma}
\providecommand{\theoremname}{Theorem}
\providecommand{\corollaryname}{Corollary}
\providecommand{\remarkname}{Remark}
\providecommand{\propositionname}{Proposition}
	
\DeclareMathOperator{\loc}{loc}

\DeclareMathOperator{\ess}{ess}

\DeclareMathOperator{\R}{R}

\begin{document}

\title[Compactness of weighted Sobolev trace operators]{Compactness of weighted Sobolev trace operators and non-linear Steklov problems}

\author{Alexander Menovschikov$^{*}$ and Alexander Ukhlov$^{**}$}
\thanks{$^{*}$The first author was supported by the RSF grant No.~25-71-00064}
\thanks{$^{**}$Corresponding author: ukhlov@math.bgu.ac.il}

\begin{abstract}
We prove the compactness of weighted Sobolev trace operators in outward cuspidal domains by using composition operators on Sobolev spaces. This result allows us to formulate the non-linear Steklov problem in outward cuspidal domains in a correct functional setting and to establish the existence of its non-trivial solution.
\end{abstract}
\maketitle

\footnotetext{\textbf{Key words and phrases:} Sobolev spaces, Trace operators, Steklov eigenvalues} 
\footnotetext{\textbf{MSC 2020:} 46E35, 35P30.}

\section{Introduction}

In recent years, the geometric analysis of partial differential equations in cuspidal domains has attracted considerable attention; see, for instance, \cite{AM20,B22,GPU24,GU17,KLP,KUZ,MP,MU26,NT}.
Remark that a complicated boundary value problem in non-Lipschitz domains is the Steklov problem, whose analysis is strongly affected by the lack of compactness of Sobolev trace operators on singular boundaries \cite{NT}. In \cite{GV} the approach to compactness of trace operators based on weights associated with the boundary geometry was suggested. On this base, the recent work \cite{GGU25} established the solvability of the reduced weighted Steklov $p$-eigenvalue problem in outward cuspidal domains. The reduced problem is understood as the formulation with an orthogonality condition, excluding mean oscillations on the boundary.

In the present work we consider the {\it non-reduced} weighted Steklov $(p,q)$-eigenvalue problem in outward cuspidal domains. The boundedness of Sobolev trace operators in outward cuspidal domains $\Omega_{\gamma}\subset\mathbb{R}^n$ can be seen, for example, in \cite{M,MP}. However, the compactness of Sobolev trace operators in outward cuspidal domains $\Omega_{\gamma}\subset\mathbb{R}^n$ is a complicated long-standing problem in the geometric analysis of Sobolev spaces. Therefore, the first part of the article is devoted to compactness of weighted Sobolev trace operators in outward cuspidal domains. The main aim is to describe the class of weights that ensure compactness of the trace operator, depending on the sharpness of the cusp.

Recall the notion of outward $\gamma$-cuspidal domains \cite{GV,KUZ}. Let $g:[0,1] \to [0,\infty)$ be a function defined by $g(t) = t^{\alpha}$, where $\alpha=(\gamma-1)/(n-1)$, $n<\gamma<\infty$. Denote $x'=(x_1,\dots,x_{n-1})$. Then an outward $\gamma$-cuspidal domain $\Omega_{\gamma}\subset\mathbb{R}^n$, $n\geq 2$, is defined by
$$
\Omega_{\gamma}=
\left\{(x',x_n): \sqrt{x_1^2+\dots+x_{n-1}^2}<g(x_n),\ 0<x_n\leq 1\right\}
\cup B^n\left((0,2), \sqrt{2}\right),
$$
where $B^n\left((0,2), \sqrt{2}\right)\subset\mathbb{R}^n$ denotes the open ball of radius $\sqrt{2}$ centered at $(0,2)\in\mathbb{R}^{n-1}\times\mathbb{R}$.

In the present work, by using the composition operators theory \cite{VU04,VU05}, we prove the compactness of the Sobolev trace operator in outward $\gamma$-cuspidal domains
\[
T: W^{1,p}(\Omega_\gamma) \to L^q(\partial\Omega_\gamma, w_\gamma),
\]
for $1<p<n$ and $1<q<\tfrac{p(n-1)}{n-p}$, with the weight
\[
w_{\gamma}(x) = x_n^{\frac{(\gamma - n)(1+p(n-2))}{(n-p)(n-1)}},\qquad x=(x',x_n)\in\partial\Omega_\gamma,
\]
where the weight $w_\gamma$ is defined by the tangential Jacobian of the cusp-straightening mapping and compensates for the distortion of the boundary measure caused by the cusp geometry. So, we refine the trace embedding theorem from \cite{GV}. In addition, we establish the sharpness of this trace embedding within the class of power weights. Consequently, we propose a new approach to trace theorems in cuspidal domains, based on weights generated by composition operators.

Remark that the geometric theory of composition operators on Sobolev spaces goes back to~\cite{U93} and forms a significant part of the modern geometric analysis of Sobolev spaces; see, for example,~\cite{HK12,KKSS14,MU25,PV25}.

By using the compactness of the Sobolev trace operator, we consider the non-reduced weighted Schr\"odinger--Steklov $(p,q)$-eigenvalue problem, for $1<p<\infty$ and $p<q<\tfrac{p(n-1)}{n-p}$,
\begin{equation*}
\begin{cases}
-\mathrm{div}(|\nabla u|^{p-2}\nabla u) + |u|^{p-2}u = 0 & \text{in } \Omega_{\gamma},\\
|\nabla u|^{p-2} \nabla u \cdot \nu = \lambda\, w_{\gamma}\,\|u\|_{L^{q}(\partial \Omega_{\gamma}, w_{\gamma})}^{p-q}\, |u|^{q-2}u & \text{on } \partial\Omega_{\gamma},
\end{cases}
\end{equation*}
in outward $\gamma$-cuspidal domains.

Namely, we prove that if $\Omega_{\gamma}$ is an outward cuspidal domain, then for the weighted Schr\"odinger--Steklov $(p,q)$-eigenvalue problem with $1<p<\infty$ and $p<q<\tfrac{p(n-1)}{n-p}$, there exists a non-trivial solution $u\in W^{1,p}(\Omega_{\gamma})$. Moreover, the first non-trivial eigenvalue $\lambda_{p,q}$ is given by
\begin{multline*}
\lambda_{p,q}=
\inf \left\{\frac{\|\nabla v\|_{L^p(\Omega_{\gamma})}^p+\|v\|_{L^p(\Omega_{\gamma})}^p}{\|v\|_{L^q(\partial \Omega_{\gamma}, w_{\gamma})}^p} : v \in W^{1,p}(\Omega_{\gamma}) \setminus \{0\}\right\}\\
=\frac{\|\nabla u\|_{L^p(\Omega_{\gamma})}^p+\|u\|_{L^p(\Omega_{\gamma})}^p}{\|u\|_{L^q(\partial \Omega_{\gamma}, w_{\gamma})}^p}.
\end{multline*}

Thus, by using the weight $w_\gamma$ defined by the cusp function, we prove the solvability of the weighted Schr\"odinger--Steklov $(p,q)$-eigenvalue problem in outward cuspidal domains. Furthermore, we obtain lower bounds for the first non-trivial Steklov eigenvalue. These results contribute to the spectral theory of nonlinear operators in singular domains.

\section{Sobolev spaces and composition operators}

\subsection{Sobolev spaces}

Let us recall the basic notions of the Sobolev spaces.
Let $\Omega$ be an open subset of $\mathbb R^n$. The Sobolev space $W^{1,p}(\Omega)$, $1<p<\infty$, is defined \cite{M}
as a Banach space of locally integrable weakly differentiable functions
$u:\Omega\to\mathbb{R}$ equipped with the following norm:
$$
\|u\|_{W^{1,p}(\Omega)}=\left(\int_{\Omega}|\nabla u(x)|^p\,dx\right)^\frac{1}{p}+\left(\int_{\Omega}|u(x)|^p\,dx\right)^\frac{1}{p},
$$
where $\nabla u$ is the weak gradient of the function $u$, i.~e. $ \nabla u = (\frac{\partial u}{\partial x_1},...,\frac{\partial u}{\partial x_n})$. In accordance with the non-linear potential theory \cite{MH72}, we consider elements of Sobolev spaces $W^{1,p}(\Omega)$ as equivalence classes up to a set of $p$-capacity zero \cite{M}.

The following result can be found, for example, in \cite[Proposition~9.1]{B11} and \cite{M}.
\begin{lem}
\label{Xuthm}
The space $W^{1,p}(\Omega)$, $1<p<\infty$, is a real separable and reflexive Banach space.
\end{lem}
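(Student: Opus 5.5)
The plan is to realize $W^{1,p}(\Omega)$ isometrically (up to equivalent norms) as a closed subspace of a product of Lebesgue spaces, and then transfer the three properties from that product. Define
$$
J: W^{1,p}(\Omega)\to L^p(\Omega)^{n+1},\qquad Ju=\Bigl(u,\frac{\partial u}{\partial x_1},\dots,\frac{\partial u}{\partial x_n}\Bigr).
$$
Equip $E=L^p(\Omega)^{n+1}$ with the norm $\|(f_0,\dots,f_n)\|_E=\bigl(\sum_{i=0}^n\|f_i\|_{L^p}^p\bigr)^{1/p}$. Then $\|Ju\|_E$ is equivalent to $\|u\|_{W^{1,p}(\Omega)}$, because $\|\,|\nabla u|\,\|_{L^p}$ and $\bigl(\sum_i\|\partial_i u\|_{L^p}^p\bigr)^{1/p}$ are comparable with constants depending only on $n,p$. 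So $J$ is a linear isomorphism onto its image. The properties in question (completeness, separability, reflexivity) are all invariant under passing to an equivalent norm.

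First I verify that $J(W^{1,p}(\Omega))$ is closed in $E$, which also gives completeness. Let $Ju_k\to(f_0,\dots,f_n)$ in $E$. Then for every $\varphi\in C_0^\infty(\Omega)$ we have $\int u_k\,\partial_i\varphi=-\int \partial_i u_k\,\varphi$. Since $L^p$ convergence implies convergence against the test function $\varphi\in L^{p'}$, passing to the limit gives $\int f_0\,\partial_i\varphi=-\int f_i\,\varphi$. Hence $f_0$ is weakly differentiable with $\partial_i f_0=f_i$, so $f_0\in W^{1,p}(\Omega)$ and $Jf_0=(f_0,\dots,f_n)$. The identification of elements up to sets of $p$-capacity zero causes no difficulty. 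Each class contains a quasicontinuous representative, which is unique up to capacity zero, and the Lebesgue a.e. classes of $W^{1,p}$ functions are in bijection with these quasicontinuous classes. So the Banach space structure is the same as in the a.e. setting.

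Next I use the standard facts about $L^p(\Omega)$ for $1<p<\infty$. It is separable: rational linear combinations of indicator functions of rational cubes contained in $\Omega$ are dense. It is reflexive, by the Riesz representation $(L^p)^*\cong L^{p'}$ applied twice, or alternatively by uniform convexity (Clarkson) together with Milman--Pettis. A finite product of separable reflexive spaces is again separable and reflexive, since $E^*\cong (L^{p'})^{n+1}$ and the canonical embedding acts componentwise. Finally, a closed subspace of a separable metric space is separable, and a closed subspace of a reflexive Banach space is reflexive; the latter follows from Kakutani's characterization via weak compactness of the unit ball, or by Hahn--Banach. Applying both to $J(W^{1,p}(\Omega))\subset E$ and pulling back through $J$ gives the lemma.

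The only genuinely nontrivial ingredient is the reflexivity of $L^p$ and its inheritance by closed subspaces. I would take both from standard functional analysis rather than reprove them, exactly as in \cite[Proposition~9.1]{B11}. Beyond that, the argument reduces to the closedness computation above.
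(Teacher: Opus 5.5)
Your proof is correct and follows essentially the same approach as the paper, which gives no argument of its own but cites \cite[Proposition~9.1]{B11}. There, $W^{1,p}(\Omega)$ is identified via $u\mapsto(u,\nabla u)$ with a closed subspace of $L^p(\Omega)\times L^p(\Omega)^n$, and completeness, separability and reflexivity are inherited exactly as you describe; your remark on identification up to sets of $p$-capacity zero is harmless but not needed.
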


Let us recall the notion of outward $\gamma$-cuspidal domains in a form convenient for our purposes.
Let $g:[0,1] \to [0,\infty)$ be a function defined by $g(t) = t^{\alpha}$, where $\alpha=(\gamma-1)/(n-1)$, $n<\gamma<\infty$. Then an outward $\gamma$-cuspidal domain $\Omega_{\gamma}\subset\mathbb{R}^n$, $n\geq 2$, is defined by
\begin{equation}
\label{cusp}
\Omega_{\gamma}= \{x = (x_1, x_2, \dots, x_n) \in \mathbb{R}^n: 0 < x_n < 1, 0 < x_i < g(x_n), i = 1, \dots, n-1\}.
\end{equation}

This domain $\Omega_{\gamma}$ is bi-Lipschitz equivalent to the radially symmetric outward $\gamma$-cuspidal domain, defined in Introduction, and therefore supports the same embedding and trace theorems.
 If $\gamma = n$, then $\Omega_n$ is an $n$-dimensional simplex.

Let $E\subset\mathbb R^n$ be a Borel set. Then $E$ is said to be an $H^{m}$-rectifiable set \cite{Fe69}, if $E$ is of Hausdorff dimension $m$, and there exists a countable collection $\{\varphi_i\}_{i\in\mathbb{N}}$ of Lipschitz continuous mappings
$$
\varphi_i: \mathbb R^m\to\mathbb R^n, 
$$   
such that the $m$-Hausdorff measure $H^m$ of the set $E\setminus \bigcup_{i=1}^{\infty}\varphi_i(\mathbb R^m)$
is zero. 

Let $\Omega\subset\mathbb R^n$ be a domain with $H^{n-1}$-rectifiable boundary $\partial\Omega$ and $w:\partial\Omega\to\mathbb R$ be a non-negative continuous function. We consider the weighted Lebesgue space $L^{p}(\partial\Omega, w)$ with the following norm
$$
\|u\|_{L^p(\partial\Omega, w)}=\left(\int_{\partial\Omega}|u(x)|^p w(x)\,d\mathcal{H}^{n-1}(x)\right)^\frac{1}{p},
$$
where $d\mathcal{H}^{n-1}$ is the $(n-1)$-dimensional Hausdorff measure on $\partial\Omega$.

Let us recall the Sobolev embedding theorem in outward $\gamma$-cuspidal domains \cite{GG94,GU09}.

\begin{thm}\label{thmemb}
Let $\Omega_{\gamma}\subset\mathbb R^n$ be an outward $\gamma$-cuspidal domain defined by \eqref{cusp}. Suppose $1<p<\gamma$. Then the embedding operator
$$
W^{1,p}(\Omega_{\gamma})\hookrightarrow L^{q}(\Omega_{\gamma})
$$
is compact for any $1<q<p_{\gamma}^*$, where $p_{\gamma}^*={\gamma p}/{(\gamma -p)}$.
\end{thm}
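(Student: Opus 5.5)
The plan is to first obtain the \emph{continuous} embedding $W^{1,p}(\Omega_\gamma)\hookrightarrow L^{p_\gamma^*}(\Omega_\gamma)$ at the critical exponent $p_\gamma^*=\gamma p/(\gamma-p)$. Compactness for every $q<p_\gamma^*$ will then follow by cutting off the cusp, using the Rellich--Kondrachov theorem away from the tip and H\"older's inequality near it.

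\emph{Step 1: a relative isoperimetric inequality with exponent $(\gamma-1)/\gamma$.} The geometry of $\Omega_\gamma$ is governed by two quantities. The cross-section $\{x_n=t\}\cap\Omega_\gamma$ is an $(n-1)$-cube of side $t^{\alpha}$, so its area is $g(t)^{n-1}=t^{\gamma-1}$. The part of the cusp below this level, $\{x_n<t\}\cap\Omega_\gamma$, has volume comparable to $t^{\gamma}$. I would prove that for every set $A\subset\Omega_\gamma$ of finite perimeter,
$$
\min\bigl(|A|,|\Omega_\gamma\setminus A|\bigr)^{\frac{\gamma-1}{\gamma}}\le C\,P(A;\Omega_\gamma),
$$
where $P(A;\Omega_\gamma)$ is the perimeter of $A$ relative to $\Omega_\gamma$. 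I expect this to be the main obstacle. My approach would be a dyadic decomposition of $\Omega_\gamma$ into slabs $S_k=\Omega_\gamma\cap\{2^{-k-1}<x_n<2^{-k}\}$. Each slab is a box of height $2^{-k}$ and cross-sectional side $\sim 2^{-k\alpha}\le 2^{-k}$ (recall $\alpha>1$). On each slab I would combine the flat relative isoperimetric inequality in $S_k$ with a comparison against the cross-sectional areas $\sim 2^{-k(\gamma-1)}$. The extremal competitors are the tips $\{x_n<t\}$, for which the inequality is sharp, and these fix the exponent $(\gamma-1)/\gamma$. Since $(\gamma-1)/\gamma>(n-1)/n$, the inequality is weaker than the Euclidean one near the tip but still of the required form.

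\emph{Step 2: the critical embedding.} By Maz'ya's theorem relating isoperimetric inequalities to Sobolev inequalities (applied via the coarea formula to the level sets of $|u-c|$, then truncation), the inequality of Step 1 gives
$$
\inf_{c\in\mathbb R}\|u-c\|_{L^{p_\gamma^*}(\Omega_\gamma)}\le C\|\nabla u\|_{L^p(\Omega_\gamma)},\qquad 1\le p<\gamma.
$$
Since $|\Omega_\gamma|<\infty$, this yields the bounded embedding $W^{1,p}(\Omega_\gamma)\hookrightarrow L^{p_\gamma^*}(\Omega_\gamma)$. Alternatively, Step 1 and Step 2 could be replaced by the composition-operator approach: pull $W^{1,p}(\Omega_\gamma)$ back to a Lipschitz domain via the cusp-straightening map. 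The isoperimetric route seems more direct.

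\emph{Step 3: compactness.} Fix $1<q<p_\gamma^*$ and $\varepsilon\in(0,1)$. Put $\Omega^\varepsilon=\Omega_\gamma\cap\{x_n>\varepsilon\}$, which is a Lipschitz domain, and $C^\varepsilon=\Omega_\gamma\cap\{x_n<\varepsilon\}$. Define
$$
T_\varepsilon u=\chi_{\Omega^\varepsilon}u .
$$
By the Rellich--Kondrachov theorem on $\Omega^\varepsilon$ (valid since $q<p_\gamma^*<np/(n-p)$ when $p<n$, and for all finite $q$ otherwise), each $T_\varepsilon: W^{1,p}(\Omega_\gamma)\to L^q(\Omega_\gamma)$ is compact. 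For the remainder, H\"older's inequality and Step 2 give
$$
\|u-T_\varepsilon u\|_{L^q(\Omega_\gamma)}=\|u\|_{L^q(C^\varepsilon)}\le |C^\varepsilon|^{\frac1q-\frac1{p_\gamma^*}}\|u\|_{L^{p_\gamma^*}(\Omega_\gamma)}\le C\,\varepsilon^{\gamma\left(\frac1q-\frac1{p_\gamma^*}\right)}\|u\|_{W^{1,p}(\Omega_\gamma)} .
$$
Hence $T_\varepsilon$ converges in operator norm to the embedding operator as $\varepsilon\to0$. A norm limit of compact operators is compact, which proves the theorem.
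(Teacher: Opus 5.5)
Your proof is correct, but it takes a different route from the one behind this statement. The paper does not prove Theorem~\ref{thmemb}; it quotes it from \cite{GG94,GU09}, where it comes from the same composition-operator scheme as the diagram \eqref{Diag}:
\begin{itemize}
\item the cusp-straightening maps $\varphi_a:\Omega_n\to\Omega_\gamma$ induce bounded operators $W^{1,p}(\Omega_\gamma)\to W^{1,s}(\Omega_n)$, $s\le p$, via the $K_{p,s}$ criterion of Theorem~\ref{CompTh};
\item one then applies the compact Rellich embedding $W^{1,s}(\Omega_n)\hookrightarrow L^r(\Omega_n)$ on the Lipschitz simplex;
\item one returns by $(\varphi_a^{-1})^*:L^r(\Omega_n)\to L^q(\Omega_\gamma)$, whose boundedness reduces, by H\"older, to integrability of a power of $J(\cdot,\varphi_a)=a\,y_n^{a\gamma-n}$;
\item a suitable choice of $a$ and $s$ reaches every $q<p_\gamma^*$.
\end{itemize}
In that approach compactness is imported directly from $\Omega_n$, and everything reduces to one-variable power integrals, the same computations reused in Theorem~\ref{main}.

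Your Maz'ya-type route works through the isoperimetric geometry of the cusp instead. It yields the stronger intermediate fact $W^{1,p}(\Omega_\gamma)\hookrightarrow L^{p_\gamma^*}(\Omega_\gamma)$ at the critical exponent. It also makes the sharpness of $p_\gamma^*$ transparent, since the tips $\{x_n<t\}$ are extremal. Compactness then follows from your cut-off argument in Step~3, which is correct as written: $|C^\varepsilon|=\varepsilon^\gamma/\gamma$, and $p_\gamma^*<np/(n-p)$ because $\gamma>n$.

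The price is Step~1, which you only sketch.
\begin{itemize}
\item The slabs $S_k$ are not uniformly fat: they have height $\sim 2^{-k}$ and width $\sim 2^{-k\alpha}$. Uniformly in $k$ you only get $P(A;S_k)\gtrsim\min\{m_k^{(n-1)/n},\,2^{-k(\gamma-1)}\}$, where $m_k=\min\{|A\cap S_k|,|S_k\setminus A|\}$.
\item This is harmless, because both terms dominate $m_k^{(\gamma-1)/\gamma}$, using $|S_k|\lesssim 2^{-k\gamma}$.
\item The real work is gluing the slabs. A set filling more than half of a run of consecutive slabs must be controlled by the perimeter forced at the transition between two adjacent slabs, combined with the geometric decay of $|S_k|$. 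When the run starts at the top slab, you must argue with the complement instead.
\end{itemize}
Alternatively, you can cite the classical result of Maz'ya that the power cusp $\{|x'|<x_n^{\alpha}\}$, $\alpha\ge 1$, satisfies the relative isoperimetric inequality with exponent $\alpha(n-1)/(\alpha(n-1)+1)=(\gamma-1)/\gamma$. This transfers to $\Omega_\gamma$ by bi-Lipschitz equivalence.
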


\subsection{Composition operators on Sobolev spaces} 
The seminormed Dirichlet-Sobolev space $L^{1,p}(\Omega)$ in a domain $\Omega\subset\mathbb R^n$
is the space of all locally integrable weakly differentiable functions with the following seminorm:
\[
\|u\|_{L^{1,p}(\Omega)}=\biggr(\int_{\Omega}|\nabla u(x)|^{p}\, dx\biggr)^{\frac{1}{p}}.
\]

Let $\Omega$ and $\widetilde{\Omega}$ be domains in the Euclidean space $\mathbb R^n$. Then a homeomorphism
$\varphi:\Omega\to \widetilde{\Omega}$ belongs to the Sobolev class $W^{1,p}_{\loc}(\Omega)$,
$1\leq p\leq\infty$, if its coordinate functions $\varphi_j$ belong to $W^{1,p}_{\loc}(\Omega)$, $j=1,\dots,n$.
In this case the formal Jacobi matrix
$D\varphi(x)=\left(\frac{\partial \varphi_i}{\partial x_j}(x)\right)$, $i,j=1,\dots,n$,
and its determinant (Jacobian) $J(x,\varphi)=\det D\varphi(x)$ are well defined at
almost all points $x\in \Omega$. The norm $|D\varphi(x)|$ of the matrix
$D\varphi(x)$ is the norm of the corresponding linear operator $D\varphi (x):\mathbb R^n \rightarrow \mathbb R^n$ defined by the matrix $D\varphi(x)$.

Recall that a Sobolev homeomorphism $\varphi: \Omega \to \widetilde{\Omega}$ of the class $W^{1,1}_{\loc}(\Omega)$ has finite distortion \cite{VGR} if
\[
D\varphi(x)=0\,\,\, \text{a.e. on the set}\,\,\, Z=\{x \in \Omega:|J(x,\varphi)|=0\}.
\]

The results of \cite{U93,VU98} give the analytic description of composition operators on seminormed Sobolev spaces  and assert that
\begin{thm}
\label{CompTh} \cite{U93} Let $\varphi:\Omega\to\widetilde{\Omega}$ be a homeomorphism
between two domains $\Omega$ and $\widetilde{\Omega}$ $\subset\mathbb R^n$. Then $\varphi$ induces by the composition rule $\varphi^{\ast}(f)=f\circ\varphi$ a bounded composition operator
\[
\varphi^{\ast}:L^{1,p}(\widetilde{\Omega})\to L^{1,s}(\Omega),\,\,\,1< s\leq p<\infty,
\]
if and only if $\varphi\in W_{\loc}^{1,1}(\Omega)$, has finite distortion,
and
\begin{equation}
\label{kps}
\begin{aligned}
K_{p,p}(\varphi;\Omega) &= \ess\sup_\Omega 
   \left(\frac{|D\varphi(x)|^p}{|J(x,\varphi)|}\right)^{\tfrac{1}{p}} < \infty,
   \quad \text{if } 1 < s=p < \infty, \\
K_{p,s}(\varphi;\Omega) &= \left(\int_\Omega 
   \left(\frac{|D\varphi(x)|^p}{|J(x,\varphi)|}\right)^{\tfrac{s}{p-s}}\, dx\right)^{\tfrac{p-s}{ps}} < \infty,
   \quad \text{if } 1 < s < p < \infty.
\end{aligned}
\end{equation}

\end{thm}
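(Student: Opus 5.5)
This is a cited result, so I would reconstruct the standard argument of \cite{U93} in two halves, taking $f\circ\varphi$ to be defined on quasicontinuous representatives.

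\textbf{Sufficiency.} Assume $\varphi\in W^{1,1}_{\loc}(\Omega)$, $\varphi$ has finite distortion, and $K_{p,s}(\varphi;\Omega)<\infty$.
\begin{enumerate}
\item First take $f\in L^{1,p}(\widetilde\Omega)\cap C^\infty(\widetilde\Omega)$. The chain rule then gives $\nabla(f\circ\varphi)(x)=D\varphi(x)^{T}\nabla f(\varphi(x))$ a.e. On the zero set $Z$ of the Jacobian, finite distortion gives $D\varphi=0$, so $\nabla(f\circ\varphi)=0$ there. Off $Z$ we have the pointwise bound
$$|\nabla(f\circ\varphi)(x)|\le \Big(\tfrac{|D\varphi(x)|^p}{|J(x,\varphi)|}\Big)^{1/p}|\nabla f(\varphi(x))|\,|J(x,\varphi)|^{1/p}.$$
\item Case $s=p$. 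Integrate the $p$-th power of this bound. The area formula (change of variables inequality) for Sobolev homeomorphisms, $\int_\Omega h(\varphi(x))|J(x,\varphi)|\,dx\le\int_{\widetilde\Omega}h\,dy$, then yields $\|f\circ\varphi\|_{L^{1,p}(\Omega)}\le K_{p,p}\|f\|_{L^{1,p}(\widetilde\Omega)}$.
\item Case $s<p$. Apply Hölder's inequality with exponents $p/(p-s)$ and $p/s$ to the $s$-th power of the bound, and then use the area formula again. This gives $\|f\circ\varphi\|_{L^{1,s}(\Omega)}\le K_{p,s}\|f\|_{L^{1,p}(\widetilde\Omega)}$.
\item Extend to general $f$ by approximation. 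Take smooth $f_k\to f$ in $L^{1,p}_{\loc}$ that also converge quasi-everywhere. The compositions then form a Cauchy sequence modulo constants in $L^{1,s}$. To identify the limit with $f\circ\varphi$, I would use that $\varphi^{-1}$ preserves sets of $p$-capacity zero, or equivalently Luzin-type properties of $\varphi$.
\end{enumerate}

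\textbf{Necessity.} Assume $\varphi^*$ is bounded.
\begin{enumerate}
\item Test on the coordinate functions $y_j$, localized by cut-offs. This gives $\varphi_j\in L^{1,s}_{\loc}$, hence $\varphi\in W^{1,1}_{\loc}(\Omega)$.
\item For open $\widetilde U\subset\widetilde\Omega$, set
$$\Phi(\widetilde U)=\sup\Big\{\|\varphi^*f\|^{ps/(p-s)}_{L^{1,s}(\varphi^{-1}(\widetilde U))} : \|f\|_{L^{1,p}(\widetilde U)}\le 1\Big\}.$$
Show that $\Phi$ is a bounded, monotone, countably quasi-additive set function, by gluing test functions with disjoint supports and using additivity of the norms.
\item Differentiate $\Phi$ with respect to Lebesgue measure. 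Testing on functions that are locally linear, $f(y)=\langle\xi,y\rangle$ truncated on small balls, gives $|D\varphi(x)|^p\le \Phi'(\varphi(x))\,|J(x,\varphi)|$ a.e. This implies both finite distortion and the integral bound after integrating $\Phi'$. In the case $s=p$, $\Phi$ is replaced by an $L^\infty$ bound.
\end{enumerate}

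The main obstacle is step 3 of the necessity part. It requires differentiating the quasi-additive function $\Phi$ and relating the derivative to $|D\varphi|^p/|J|$ through the chain rule at points of approximate differentiability. Handling the zero set of $J$ at the same time is the delicate part, and I would use the a.e. differentiability of $W^{1,1}_{\loc}$ homeomorphisms for it.
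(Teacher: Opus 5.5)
The paper does not prove this theorem; it quotes it from \cite{U93,VU98}. I therefore compare your sketch with the standard argument there. Your sufficiency half and necessity steps 1--2 follow that argument. The genuine gap is in necessity step 3, where the content of the theorem lies, and it has two parts.

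\textbf{The pointwise inequality is wrong.} Your inequality $|D\varphi(x)|^p\le \Phi'(\varphi(x))\,|J(x,\varphi)|$ has the wrong homogeneity for your normalization of $\Phi$ (exponent $ps/(p-s)$). For $\varphi=\lambda\,\mathrm{id}$ one computes $\Phi'\equiv\lambda^{p(s-n)/(p-s)}$, so your inequality would read $\lambda^p\le\lambda^{s(p-n)/(p-s)}$. This fails, for example, for $n=3$, $p=2$, $s=3/2$, $\lambda>1$. Integrating it would in any case give only $\int_\Omega|D\varphi|^p\,dx\le\Phi(\widetilde\Omega)$, not $K_{p,s}<\infty$. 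The correct bound is $|D\varphi(x)|^s\le C\,\Phi'(\varphi(x))^{(p-s)/p}|J(x,\varphi)|$, that is,
\[
\Bigl(\frac{|D\varphi(x)|^p}{|J(x,\varphi)|}\Bigr)^{\frac{s}{p-s}}\le C\,\Phi'(\varphi(x))\,|J(x,\varphi)| \quad\text{for a.e. } x\in\Omega\setminus Z .
\]
Only this form, with the factor $|J|$ matching the change of variables, integrates to $K_{p,s}(\varphi;\Omega)^{ps/(p-s)}\le C\int_{\widetilde\Omega}\Phi'\,dy\le C\,\Phi(\widetilde\Omega)$.

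\textbf{Finite distortion does not follow.} No inequality of this kind can give finite distortion. $\Phi'$ exists only Lebesgue-a.e. in $\widetilde\Omega$, while by the area formula $\varphi(Z\setminus S)$ is Lebesgue-null. So $\Phi'\circ\varphi$ carries no information on $Z$, and the (approximate) differentiability of $\varphi$ does not help there.

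\textbf{The missing estimate.} You need a bound on arbitrary open sets, not on small balls around $\varphi(x)$. Take an open $\widetilde U\Subset\widetilde\Omega$ and a Whitney decomposition $\{Q_j\}$ of it. Use the test functions $f_j(y)=\langle\xi,y-c_j\rangle\eta_j(y)$, where $\eta_j=1$ near $\overline{Q}_j$ and $\operatorname{supp}\eta_j\subset Q_j^*\subset\widetilde U$ for a slightly dilated cube $Q_j^*$. Then:
\begin{enumerate}
\item $\|\nabla f_j\|_{L^p}^p\le C|Q_j|$.
\item The cubes $Q_j^*$ split into $C(n)$ subfamilies of pairwise disjoint cubes.
\item Quasi-additivity of $\Phi$ together with H\"older then gives
\[
\int_{\varphi^{-1}(\widetilde U)}|D\varphi|^s\,dx\le C\,\Phi(\widetilde U)^{\frac{p-s}{p}}\,|\widetilde U|^{\frac{s}{p}},
\]
and for $s=p$ the bound $C\|\varphi^*\|^p|\widetilde U|$.
\end{enumerate}
Hence the measure $E\mapsto\int_{\varphi^{-1}(E)}|D\varphi|^s\,dx$ is absolutely continuous with respect to Lebesgue measure on $\widetilde\Omega$. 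This gives $\int_Z|D\varphi|^s\,dx=0$, which is finite distortion. By the change of variables, its density is $(|D\varphi|^s/|J|)\circ\varphi^{-1}$. Differentiating the display on balls then yields the corrected pointwise bound above.

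\textbf{Smaller points.}
\begin{enumerate}
\item The supremum defining $\Phi$ must run over continuous $f$ with compact support in $\widetilde U$. Otherwise $\Phi$ need not be finite, since $f\in L^{1,p}(\widetilde U)$ need not extend to $\widetilde\Omega$, and the gluing fails.
\item In sufficiency step 4, approximate $f$ in the $L^{1,p}(\widetilde\Omega)$ seminorm, not in $L^{1,p}_{\loc}$. Otherwise the compositions need not be Cauchy in $L^{1,s}(\Omega)$.
\item The capacity property you invoke must be proved, not assumed. Applying your step 2/3 estimate to smooth capacitary test functions gives $\operatorname{cap}_s(\varphi^{-1}(E))=0$, hence $|\varphi^{-1}(E)|=0$, whenever $\operatorname{cap}_p(E)=0$.
\end{enumerate}
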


Let us recall the formula of the change of variables in the Lebesgue integral \cite{Fe69, Ha93} in the required for us form.
Let $\Omega,\widetilde{\Omega}$ be domains in $\mathbb R^n$ and let a homeomorphism $\varphi : \Omega\to \widetilde{\Omega}$ be such that there exists a collection of closed sets $\{A_k\}_1^{\infty}$, $A_k\subset A_{k+1}\subset \Omega$ for which restrictions $\varphi \vert_{A_k}$ are Lipschitz mappings on the sets $A_k$ and 
$$
\biggl|\Omega\setminus\bigcup\limits_{k=1}^{\infty}A_k\biggr|=0.
$$
Then there exists a Borel set $S\subset \Omega$, $|S|=0$,  such that  the mapping $\varphi:\Omega\setminus S \to \widetilde{\Omega}\setminus\widetilde{S}$, $\widetilde{S}=\varphi(S)$, has the Luzin $N$-property and the change of variables formula
\begin{equation}
\label{chvf}
\int\limits_E f\circ\varphi (x) |J(x,\varphi)|~dx=\int\limits_{\varphi(E)\setminus\widetilde{S}} f(y)~dy
\end{equation}
holds for every measurable set $E\subset \Omega$ and for every nonnegative measurable function $f: \widetilde{\Omega}\to\mathbb R$. 
If a mapping $\varphi$ possesses the Luzin $N$-property (the image of a set of measure zero has measure zero), then $|\varphi (S)|=0$ and the second integral can be rewritten as the integral on $\varphi(E)$.

\section{Compactness of the trace operator}

\subsection{Sufficient conditions in outward cuspidal domains}

In this subsection, we establish conditions on weights which give compactness of the trace operator in the outward $\gamma$-cuspidal domain $\Omega_\gamma$. We focus on the behavior of the weight near the vertex of the outward $\gamma$-cusp.

\begin{thm}\label{main}
    Let $\Omega_{\gamma}\subset\mathbb R^n$ be a bounded outward $\gamma$-cuspidal domain and $W^{1,p}(\Omega_{\gamma})$, $1 < p<n<\gamma<\infty$, be a Sobolev space on $\Omega_\gamma$. Then the trace embedding operator
    $$
        T: W^{1,p}(\Omega_\gamma) \to L^q(\partial\Omega_\gamma, w_\gamma)
    $$
    is continuous for all $1 < q \leq \frac{p(n-1)}{n-p}$ with the weight 
\begin{equation}
\label{weight}
w_{\gamma}(x) = x_n^{\frac{(\gamma - n)(1+p(n-2))}{(n-p)(n-1)}},\,\, x=(x',x_n) \in \partial\Omega_\gamma,
\end{equation}

Moreover, the trace operator is compact if $1 < q < \frac{p(n-1)}{n-p}$.
\end{thm}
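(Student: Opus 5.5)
The plan is to reduce the trace problem on the cusp to the classical trace theorem on a Lipschitz domain, using a cusp-straightening homeomorphism and Theorem~\ref{CompTh}. Let $\Omega_n=\{y:0<y_n<1,\ 0<y_i<y_n,\ i=1,\dots,n-1\}$ be the simplex. I would take
$$
\varphi:\Omega_n\to\Omega_\gamma,\qquad \varphi(y',y_n)=\bigl(y'\,y_n^{\alpha\beta-1},\,y_n^{\beta}\bigr),
$$
with a power $\beta\ge 1$ left free at first and fixed at the end by the exponent bookkeeping; $\beta=1$ is the most naive choice. The map is smooth away from the vertex, with $|D\varphi(y)|\lesssim y_n^{\min(\beta-1,\alpha\beta-1)}$ and
$$
J(y,\varphi)=\beta\, y_n^{(n-1)(\alpha\beta-1)+\beta-1}.
$$
It maps the face $\{y_i=y_n\}$ onto the lateral face $\{x_i=g(x_n)\}$ and the faces $\{y_i=0\}$ onto the coordinate faces of $\partial\Omega_\gamma$. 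Its restriction to $\partial\Omega_n$ is Lipschitz on compacts away from $0$, so the area formula holds on the boundary with a tangential Jacobian $J_{\tan}(y)$, which is also a power of $y_n$.

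\emph{Step 1 (composition operator).} Compute $|D\varphi|^p/|J(\cdot,\varphi)|\sim y_n^{-\kappa}$ for an explicit $\kappa>0$. Choose $s<p$ with $\kappa s/(p-s)<n$, so that $K_{p,s}(\varphi;\Omega_n)<\infty$; this uses polar-type integration in the simplex, where $\int y_n^{-a}\,dy<\infty$ iff $a<n$. Theorem~\ref{CompTh} then gives a bounded operator
$$
\varphi^*:L^{1,p}(\Omega_\gamma)\to L^{1,s}(\Omega_n).
$$
For the $L^s$ part of the norm, write
$$
\int_{\Omega_n}|u\circ\varphi|^s\,dy=\int_{\Omega_\gamma}|u|^s\,J(\varphi^{-1}(x),\varphi)^{-1}\,dx.
$$
Apply Hölder, using Theorem~\ref{thmemb}: it places $u$ in $L^r(\Omega_\gamma)$ for every $r<\gamma p/(\gamma-p)$, and $J^{-1}\circ\varphi^{-1}$ is a negative power of $x_n$ that lies in a suitable $L^{t}$. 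This yields a bounded $\varphi^*:W^{1,p}(\Omega_\gamma)\to W^{1,s}(\Omega_n)$.

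\emph{Step 2 (trace on the Lipschitz domain).} Apply the standard trace theorem on the simplex:
$$
W^{1,s}(\Omega_n)\to L^{r}(\partial\Omega_n)
$$
is bounded for $r\le s(n-1)/(n-s)$ and compact for strict inequality. Since $s<p$, this range is smaller than the one claimed in the theorem. So I would let $s\uparrow p$, and correspondingly $\beta$ vary, and check that the critical exponent $q=p(n-1)/(n-p)$ can be reached. If it cannot with an honest $s<p$, the fallback is to combine Step 1 with a weighted trace inequality on $\Omega_n$ with a power weight at the vertex, proved directly near $y=0$ by a dyadic decomposition and scaling.

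\emph{Step 3 (pull back the boundary integral).} By the area formula on each face,
$$
\int_{\partial\Omega_\gamma}|u|^q w_\gamma\,d\mathcal H^{n-1}=\int_{\partial\Omega_n}|u\circ\varphi|^q\,(w_\gamma\circ\varphi)\,J_{\tan}\,d\mathcal H^{n-1}.
$$
The weight $w_\gamma$ is designed so that $(w_\gamma\circ\varphi)J_{\tan}$ is bounded, or at least lies in the dual Lebesgue space needed for a Hölder step against $L^r(\partial\Omega_n)$ with $r>q$. This gives $\|Tu\|_{L^q(\partial\Omega_\gamma,w_\gamma)}\lesssim\|\varphi^*u\|_{W^{1,s}(\Omega_n)}\lesssim\|u\|_{W^{1,p}(\Omega_\gamma)}$. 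For compactness in the subcritical range, the middle trace operator is compact, and a composition of bounded operators with a compact one is compact.

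\emph{Main obstacle.} The hard part is the exponent bookkeeping. Three conditions must hold simultaneously:
\begin{enumerate}
\item $K_{p,s}<\infty$;
\item the trace exponent $s(n-1)/(n-s)$ dominates $q$, after a possible Hölder loss on the boundary;
\item the boundary factor $(w_\gamma\circ\varphi)J_{\tan}$ is controlled exactly by the power $\frac{(\gamma-n)(1+p(n-2))}{(n-p)(n-1)}$.
\end{enumerate}
I would first solve for $\beta$ and $s$ as functions of $p,n,\gamma$ so that (2) holds with equality at $q=p(n-1)/(n-p)$, and then verify (1) and (3). The endpoint continuity claim is the most delicate point and may require the weighted-trace fallback in Step 2 rather than the plain Lipschitz trace theorem.
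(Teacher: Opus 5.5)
\textbf{There is a genuine gap in Step 1, caused by your restriction $\beta\ge 1$.} Your overall architecture is the same as the paper's: pull back by the straightening map $y\mapsto(y'y_n^{\alpha\beta-1},y_n^{\beta})$, apply the classical trace theorem on the simplex, then push the boundary integral forward by the area formula on the faces. But the constraint $\beta\ge1$ breaks the argument.

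\emph{Why it fails.} By your own formulas, $|D\varphi|\lesssim y_n^{\beta-1}$ and $J(y,\varphi)=\beta y_n^{\beta\gamma-n}$. Hence $|D\varphi|^p/J\sim y_n^{-\kappa}$ with $\kappa=\beta(\gamma-p)-(n-p)$. For $\beta\ge 1$ this gives $\kappa\ge\gamma-n>0$ uniformly in $\beta$.
\begin{itemize}
\item $K_{p,s}<\infty$ then forces $s<np/(n+\kappa)\le np/\gamma$, so letting $s\uparrow p$ is impossible.
\item The simplex trace theorem then yields at most $r\le s(n-1)/(n-s)<p(n-1)/(\gamma-p)$, strictly below $p(n-1)/(n-p)$.
\item The H\"older step in Step 3 cannot compensate. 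It would have to bound a weighted $L^q$ norm by an $L^r$ norm with $q>r$, while the weight is bounded below away from the vertex.
\end{itemize}
So you obtain neither endpoint continuity nor the full compactness range. The weighted-trace fallback you mention is left unspecified; it would amount to a separate dyadic proof of the whole theorem.

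\emph{The missing idea.} Take the straightening exponent \emph{less} than $1$. For $\beta=a\in\bigl(0,\tfrac{n-p}{\gamma-p}\bigr]$ you have $\kappa\le 0$. Since $a\alpha>a$, one still has $|D\varphi_a|\lesssim y_n^{a-1}$, so
\[
|D\varphi_a|^p/J(\cdot,\varphi_a)\lesssim y_n^{(n-p)-a(\gamma-p)}
\]
is bounded.
\begin{itemize}
\item Thus $K_{p,p}(\varphi_a;\Omega_n)<\infty$, and Theorem~\ref{CompTh} gives $\varphi_a^*:L^{1,p}(\Omega_\gamma)\to L^{1,p}(\Omega_n)$ with no loss of integrability.
\item The $L^p$ part needs no appeal to Theorem~\ref{thmemb}: $J(x,\varphi_a^{-1})=a^{-1}x_n^{n/a-\gamma}$ is bounded because $a\le n/\gamma$.
\item Hence $\varphi_a^*:W^{1,p}(\Omega_\gamma)\to W^{1,p}(\Omega_n)$ is bounded, and the classical trace theorem on $\Omega_n$ applies at the full exponent $p$, including the endpoint.
\item In your Step 3, the tangential Jacobian of $\varphi_a^{-1}$ on $\partial\Omega_\gamma$ is comparable to $x_n^{(n-1)/a-(n-2)\alpha-1}$. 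At the largest admissible value $a=\frac{n-p}{\gamma-p}$, this exponent equals exactly $\frac{(\gamma-n)(1+p(n-2))}{(n-p)(n-1)}$.
\item So $(w_\gamma\circ\varphi_a)J_{\tan}\le a$, and the boundary step is a plain bounded change of variables with no H\"older loss.
\end{itemize}
This gives continuity up to $q=\frac{p(n-1)}{n-p}$ and compactness below it. Choosing $a$ maximal yields the smallest weight, since $x_n^{(n-1)/a-(n-2)\alpha-1}$ increases with $a$.
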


\begin{proof}
To prove this weighted trace theorem, we pull back the trace problem from the singular domain $\Omega_\gamma$ to the Lipschitz domain $\Omega_n$ by using mappings $\varphi_a : \Omega_n \to \Omega_\gamma$, $a>0$, which generate bounded composition operators
\[
\varphi_a^{\ast}: W^{1,p}(\Omega_\gamma) \to W^{1,p}(\Omega_n), \qquad 1 < p < n.
\]
Using these mappings, we obtain trace embeddings into weighted spaces $L^{q}(\partial\Omega_\gamma, w_{\gamma,a})$ ordered by inclusion. By selecting the optimal target space in this family, we complete the proof. This method is based on the following anticommutative diagram, suggested in \cite{GG94,GU09}:
    \begin{equation}\label{Diag}
\begin{CD}
    W^{1,p}(\Omega_\gamma) @>{\varphi_a^\ast}>> W^{1,p}(\Omega_n) \\
    @VVV @VVV \\
    \qquad L^q(\partial\Omega_\gamma, w_{\gamma,a}) 
        @<{(\varphi_a^{-1})^\ast}<< 
        L^q(\partial\Omega_n)
\end{CD}
\end{equation}

In this diagram, $\varphi_a^\ast$ and $(\varphi_a^{-1})^\ast$ are composition operators, defined by the rules $\varphi_a^\ast(f) = f \circ \varphi_a$ and $(\varphi_a^{-1})^\ast(g) = g \circ \varphi_a^{-1}$, respectively, where the homeomorphisms $\varphi_a : \overline{\Omega}_n \to \overline{\Omega}_\gamma$, $a>0$ (see, for example, \cite{GG94,GU09}), are given by
\[
    \varphi_a(y) = (y_1 y_n^{a\alpha - 1}, \dots, y_{n-1} y_n^{a\alpha - 1}, y_n^a), 
    \qquad \alpha = \frac{\gamma - 1}{n - 1}.
\]

The Jacobi matrix of $\varphi_a$ in $\Omega_n$ takes the form:
\begin{multline*}
    D\varphi_a(y) =
    \begin{pmatrix}
        y_n^{a\alpha - 1} & 0 & \dots & (a\alpha-1) y_1 y_n^{a\alpha - 2} \\
        0 & y_n^{a\alpha - 1} & \dots & (a\alpha-1) y_2 y_n^{a\alpha - 2} \\
        \dots & \dots & \dots & \dots \\
        0 & 0 & \dots & ay_n^{a-1}
    \end{pmatrix} \\
    = y_n^{a-1}
    \begin{pmatrix}
        y_n^{a\alpha - a} & 0 & \dots & (a\alpha-1)\frac{y_1}{y_n} y_n^{a\alpha - a} \\
        0 & y_n^{a\alpha - a} & \dots & (a\alpha-1)\frac{y_2}{y_n} y_n^{a\alpha - a} \\
        \dots & \dots & \dots & \dots \\
        0 & 0 & \dots & a
    \end{pmatrix}.
\end{multline*}

Since $0 < y_n < 1$ and $0 < \frac{y_1}{y_n} < 1$ for all $y \in \Omega_n$, we obtain the estimate
\[
    |D\varphi_a(y)| \leq y_n^{a-1} \bigl((a\alpha -1)^2(n-1) + (n-1) + a^2\bigr)^{1/2}.
\]
The Jacobian of this mapping is
\[
    J(y,\varphi_a) = a\, y_n^{(a\alpha-1)(n-1) + a - 1} = a\, y_n^{a\gamma - n}.
\]

Hence
\begin{multline*}
K_{p,p}(\varphi_a;\Omega_n)
= \ess\sup_{\Omega_n}
\left(\frac{|D\varphi_a(y)|^p}{|J(y,\varphi_a)|}\right)^{1/p} \\
\leq
\sup_{0<y_n<1}
\frac{y_n^{a-1} \bigl((n-1)((a\alpha -1)^2+1) + a^2\bigr)^{1/2}}
     {a^{1/p} y_n^{(a\gamma - n)/p}}
		\\
=
\sup_{0<y_n<1} y_n^{\,a-1 - \frac{a\gamma - n}{p}}
\left(\frac{1}{a}\right)^{1/p}
\bigl((n-1)((a\alpha -1)^2+1) + a^2\bigr)^{1/2}
< \infty,
\end{multline*}
provided that
\[
    a - 1 - \frac{a\gamma - n}{p} \ge 0
    \;\Longleftrightarrow\;
    a \le \frac{n-p}{\gamma - p}.
\]

Therefore, by Theorem~\ref{CompTh}, the homeomorphism $\varphi_a$, $a \in (0, \frac{n-p}{\gamma - p}]$, generates a bounded composition operator
\[
    \varphi_a^\ast : L^{1,p}(\Omega_\gamma) \to L^{1,p}(\Omega_n).
\]

The mapping $\varphi_a : \Omega_n \to \Omega_\gamma$ is a local bi-Lipschitz homeomorphism, and its inverse 
$\varphi_a^{-1} : \Omega_\gamma \to \Omega_n$ has the Jacobian
\[
    J(x,\varphi_a^{-1}) = \frac{1}{a}\, x_n^{\frac{n}{a}-\gamma},
\]
which is bounded provided that $\frac{n}{a}-\gamma \ge 0$, i.e. $a \le \frac{n}{\gamma}$.

Hence, by \cite{VU04,VU05}, the homeomorphism $\varphi_a$ generates a bounded composition operator
\[
    \varphi_a^\ast : L^{p}(\Omega_\gamma) \to L^{p}(\Omega_n),\quad 0 < a \le \frac{n}{\gamma},
\]
and so, the homeomorphism $\varphi_a$ generates a bounded composition operator
\[
    \varphi_a^\ast : W^{1,p}(\Omega_\gamma) \to W^{1,p}(\Omega_n)
\]
for all
\[
    0 < a \le \min\left\{\frac{n}{\gamma},\, \frac{n-p}{\gamma-p}\right\}
    = \frac{n-p}{\gamma-p}.
\]

It is known (see, e.g. \cite{M}) that in the Lipschitz domain $\Omega_n$ the trace operator
\[
    T : W^{1,p}(\Omega_n) \to L^q(\partial\Omega_n), \qquad 1 < p < n,
\]
is continuous for all $1 < q \le \frac{p(n-1)}{n-p}$ and is compact for all $1 < q < \frac{p(n-1)}{n-p}$.

Let us prove that the composition operator
\[
    (\varphi_a^{-1})^\ast : L^q(\partial\Omega_n) 
    \to L^q(\partial\Omega_\gamma, w_{\gamma,a}), 
    \qquad 
    w_{\gamma,a}(x) = x_n^{\frac{n-1}{a} - (n-2)\alpha - 1},
\]
acting on Lebesgue spaces with respect to the $(n-1)$-dimensional Hausdorff measure, is bounded.

The  restriction of $\varphi_a^{-1}$ to $\partial\Omega_\gamma$ is a locally bi-Lipschitz homeomorphism $\varphi_a^{-1}|_{\partial\Omega_\gamma}: \partial\Omega_{\gamma}\to \partial\Omega_{n}$.
Hence, by the area formula \cite{Fe69}, the equality 
    \begin{equation}\label{eq:area-boundary}
        \int_{\partial\Omega_n} g(y)\, d\mathcal H^{n-1}(y) = \int_{\partial\Omega_\gamma} g(\varphi_a^{-1}(x))\, 
				|J(x, \varphi_a^{-1}|_{\partial\Omega_\gamma})|\, d\mathcal H^{n-1}(x),
    \end{equation}
 for every measurable $g:\partial\Omega_n\to[0,\infty)$, where $J(x, \varphi_a^{-1}|_{\partial\Omega_\gamma})$ denotes the $(n-1)$--dimensional tangential Jacobian of $\varphi_a^{-1}$ restricted to $\partial\Omega_\gamma$.

Further we compute $J(x, \varphi_a^{-1}|_{\partial\Omega_\gamma})$ on the $(n-1)$-dimensional faces of the boundary $\partial\Omega_{\gamma}$. Recall that for a $C^1$--parametrization $\Phi:U\subset\mathbb R^{n-1}\to\partial\Omega_\gamma$, the tangential Jacobian of $\varphi_a^{-1}$ at a point $x=\Phi(u)$ is given by
$$
    J(x, \varphi_a^{-1}|_{\partial\Omega_\gamma}) = \sqrt{\det\bigl( D(\varphi_a^{-1}\circ\Phi)(u)^T\,D(\varphi_a^{-1}\circ \Phi)(u)\bigr)},
$$
and does not depend on the choice of the parametrization.

On boundary faces $F_i^0:=\{x\in\partial\Omega_\gamma:\ x_i=0\}$, $i=1,\dots,n-1$, we use coordinates $\hat{x} = (x_1,\dots,x_{i-1},x_{i+1},\dots,x_n)$. The tangential differential of $\varphi_a^{-1}$ with respect to these coordinates is upper triangular, hence
\begin{equation}\label{eq:Jpartial-flat}
    J(x, \varphi_a^{-1}|_{F_i^0}) = \frac{1}{a}\,x_n^{\frac{n-1}{a}-(n-2)\alpha-1},  \quad x\in F_i^0.
\end{equation}

The boundary faces $F_i^\alpha=\{x\in\partial\Omega_\gamma:\ x_i=x_n^\alpha\}$, $i=1,\dots,n-1$, are parametrized as
$$
    \Phi_i^\alpha(\hat x,t)=(x_1,\dots,x_{i-1},t^\alpha,x_{i+1},\dots,x_{n-1},t),
$$
where $\hat x=(x_1,\dots,x_{i-1},x_{i+1},\dots,x_{n-1})$ and $0<x_j<t^\alpha$ for $j\neq i$, and $0<t<1$.
Then 
\begin{equation}\label{eq:Jpartial-slanted-exact}
    J(x, \varphi_a^{-1}|_{F_i^\alpha}) = t^{(\frac1a-\alpha)(n-2)}\,t^{\frac1a-1}\, \sqrt{\frac{1}{a^2}+\alpha^2+(\tfrac1a-\alpha)^2\sum_{j\neq i}\frac{x_j^2}{t^{2\alpha}} }.
\end{equation}

Since $0<x_j<t^\alpha$ for $j\neq i$, we have $0\le x_j^2/t^{2\alpha}\le 1$ and hence
$$
    \frac{1}{a} \leq \sqrt{\frac{1}{a^2}+\alpha^2+(\tfrac1a-\alpha)^2\sum_{j\neq i}\frac{x_j^2}{t^{2\alpha}} } \leq \sqrt{\frac1{a^2}+\alpha^2+(\tfrac1a-\alpha)^2(n-2)}.
$$
Combining this inequality with \eqref{eq:Jpartial-slanted-exact} we have
\begin{equation}\label{eq:Jpartial-slanted-bounds}
    \frac{1}{a}\,x_n^{\frac{n-1}{a}-(n-2)\alpha-1} \leq J(x, \varphi_a^{-1}|_{F_i^\alpha}) \leq \sqrt{\frac1{a^2}+\alpha^2+(\tfrac1a-\alpha)^2(n-2)}\, x_n^{\frac{n-1}{a}-(n-2)\alpha-1}.
\end{equation}

Finally, taking the maximum of the upper constants from \eqref{eq:Jpartial-flat}
and \eqref{eq:Jpartial-slanted-bounds} we obtain
\begin{equation}\label{eq:tang-jac-bounds}
    \frac{1}{a}\,x_n^{\frac{n-1}{a}-(n-2)\alpha-1} \leq J(x, \varphi_a^{-1}|_{\partial\Omega_\gamma}) \leq C(a,n,\alpha)\,x_n^{\frac{n-1}{a}-(n-2)\alpha-1},
\end{equation}
with
$$
C(a,n,\alpha)= \sqrt{\frac1{a^2}+(\tfrac1a-\alpha)^2(n-1)+\alpha^2}.
$$

    In particular, the composition operator
    $$
        (\varphi_a^{-1})^*: L^q(\partial\Omega_n) \to L^q(\partial\Omega_\gamma, w_{\gamma,a}),
    $$
    is a bounded operator with $w_{\gamma, a} (x) = x_n^{\frac{n-1}{a}-(n-2)\alpha-1}$.

By the previous steps we conclude that the trace operator
\[
    T: W^{1,p}(\Omega_\gamma) \to L^q(\partial\Omega_\gamma, w_{\gamma,a})
\]
is continuous for all \(q \le \frac{p(n-1)}{n-p}\), and is compact for all \(q < \frac{p(n-1)}{n-p}\).

Thus we obtain the trace embedding of \(W^{1,p}(\Omega_\gamma)\) into the spaces \(L^q(\partial\Omega_\gamma, w_{\gamma,a})\) with weights
\[
w_{\gamma,a} = x_n^{\frac{n-1}{a} - (n-2)\alpha - 1}.
\]

Since the function \(\beta(a) = \frac{n-1}{a} - (n-2)\alpha - 1\) is strictly decreasing for \(a>0\), for \(a_1 < a_2\) we have \(\beta(a_2) < \beta(a_1)\), and hence, for \(0 < x_n < 1\),
\[
    x_n^{\beta(a_1)} \le x_n^{\beta(a_2)}.
\]
Therefore,
\[
w_{\gamma,a_1}(x) < w_{\gamma,a_2}(x).
\]
This implies that \(L^q(\partial\Omega_\gamma, w_{\gamma,a_2}) \subset L^q(\partial\Omega_\gamma, w_{\gamma,a_1})\) whenever \(a_1 < a_2\), see e.g. \cite{RS21}.

Thus the sharp target space among \(L^q(\partial\Omega_\gamma, w_{\gamma,a})\) is the one corresponding to the maximal value \(a = \frac{n-p}{\gamma-p}\), i.e. the sharp trace embedding is
\[
T: W^{1,p}(\Omega_\gamma) \hookrightarrow L^q(\partial\Omega_\gamma, w_\gamma),
\]
with the weight
\[
w_\gamma(x) = x_n^{\frac{(\gamma - n)(1 + p(n-2))}{(n-p)(n-1)}}.
\]

\end{proof}

Since the weight in Theorem~\ref{main} controls the boundary singularity of the cusp, we can now deduce the corresponding unweighted trace estimate.

\begin{cor}\label{cor:unweighted-trace}
Let $\Omega_{\gamma}\subset\mathbb R^n$ be a bounded outward $\gamma$-cuspidal domain and let
$1<p<n<\gamma<\infty$. Then the trace operator
$$
T: W^{1,p}(\Omega_\gamma)\to L^r(\partial\Omega_\gamma)
$$
is compact for all
$$
1<r<\frac{p\bigl(1+(n-2)\gamma\bigr)}{(\gamma-p)(n-1)}= \frac{p\, d(\gamma)}{n-p}
$$
where \(d(\gamma)=\frac{(n-p)(\gamma n - 2\gamma + 1)}{(\gamma-p)(n-1)}\) 
can be interpreted as the effective boundary dimension of the cusp 
(reducing to \(n-1\) in the Lipschitz case).
\end{cor}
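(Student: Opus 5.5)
Since $w_\gamma$ is a positive power of $x_n$ that degenerates at the vertex, the plan is to deduce the corollary from Theorem~\ref{main} by a weighted H\"older inequality on $\partial\Omega_\gamma$. Write $\beta=\frac{(\gamma-n)(1+p(n-2))}{(n-p)(n-1)}$, so that $w_\gamma=x_n^{\beta}$. Fix $r$ in the stated range and choose $q$ with $r<q<\frac{p(n-1)}{n-p}$; this $q$ will be tuned at the end. For a function $u$ on $\partial\Omega_\gamma$ we write $|u|^r=|u|^r x_n^{\beta r/q}\cdot x_n^{-\beta r/q}$ and apply H\"older's inequality with exponents $q/r$ and $q/(q-r)$:
\[
\int_{\partial\Omega_\gamma}|u|^r\,d\mathcal H^{n-1}\le \|u\|_{L^q(\partial\Omega_\gamma,w_\gamma)}^{r}\left(\int_{\partial\Omega_\gamma}x_n^{-\frac{\beta r}{q-r}}\,d\mathcal H^{n-1}\right)^{\frac{q-r}{q}}.
\]
So it suffices to show that the last integral is finite for a suitable $q$. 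Then the identity embedding $L^q(\partial\Omega_\gamma,w_\gamma)\hookrightarrow L^r(\partial\Omega_\gamma)$ is bounded, and composing it with the compact trace operator of Theorem~\ref{main} gives a compact operator.

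To check finiteness we integrate over the faces $F_i^0$ and $F_i^\alpha$ described in the proof of Theorem~\ref{main}. On each face the level set $\{x_n=t\}$ is an $(n-2)$-dimensional cube of side $t^\alpha$. The surface element is comparable to $dt$ times the $(n-2)$-dimensional area element; on the slanted faces the extra factor $\sqrt{1+\alpha^2t^{2\alpha-2}}$ is bounded because $\alpha>1$. The integral is therefore comparable to
\[
\int_0^1 t^{-\frac{\beta r}{q-r}+\alpha(n-2)}\,dt,
\]
which is finite if and only if $\frac{\beta r}{q-r}<\alpha(n-2)+1=\frac{1+(n-2)\gamma}{n-1}$.

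It remains to verify the exponent condition. The condition $\frac{\beta r}{q-r}<\alpha(n-2)+1$ is equivalent to $r<\frac{q(\alpha(n-2)+1)}{\beta+\alpha(n-2)+1}$. Letting $q\uparrow\frac{p(n-1)}{n-p}$, it suffices to check the identity
\[
\frac{p(n-1)}{n-p}\cdot\frac{\alpha(n-2)+1}{\beta+\alpha(n-2)+1}=\frac{p\,(1+(n-2)\gamma)}{(\gamma-p)(n-1)}.
\]
This is equivalent to $\beta+\frac{1+(n-2)\gamma}{n-1}=\frac{(\gamma-p)(n-1)}{n-p}$, and I would confirm it by direct algebra. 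Indeed, multiplying by $(n-p)(n-1)$, the left side becomes $(\gamma-n)(1+p(n-2))+(n-p)(1+(n-2)\gamma)$, which simplifies to $(n-1)^2(\gamma-p)$. Since $r$ is strictly below the bound, some admissible $q<\frac{p(n-1)}{n-p}$ exists, so the compact case of Theorem~\ref{main} applies. The identity $\frac{p\,(1+(n-2)\gamma)}{(\gamma-p)(n-1)}=\frac{p\,d(\gamma)}{n-p}$ follows immediately from the definition of $d(\gamma)$.

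The main obstacle is only this bookkeeping: choosing $q$ strictly inside the compactness range while keeping the H\"older exponent integrable, and verifying the exponent identity.
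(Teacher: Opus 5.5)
Your proposal is correct and follows the paper's proof essentially step for step. You use the same H\"older splitting with exponents $q/r$ and $q/(q-r)$, the same face-by-face reduction to the finiteness of $\int_0^1 t^{\alpha(n-2)-\beta r/(q-r)}\,dt$, and the same choice of $q$ just below $p(n-1)/(n-p)$ via the identity $\beta+\alpha(n-2)+1=(\gamma-p)(n-1)/(n-p)$. You verify that identity explicitly, where the paper only asserts it; the one point you leave implicit is the top face $\{x_n=1\}$, whose contribution is trivially finite, as the paper notes.
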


\begin{proof}
Set
$$
p^*:=\frac{p(n-1)}{n-p},
\qquad
\alpha:=\frac{\gamma-1}{n-1},
\qquad
\beta:=\frac{(\gamma-n)(1+p(n-2))}{(n-p)(n-1)}.
$$
By Theorem~\ref{main}, for every
$$
1<q<p^*
$$
the trace operator
$$
T_q:W^{1,p}(\Omega_\gamma)\to L^q(\partial\Omega_\gamma,w_\gamma),
\qquad
w_\gamma(x)=x_n^\beta,
$$
is compact.

Fix $r$
$$
1<r<\frac{p\bigl(1+(n-2)\gamma\bigr)}{(\gamma-p)(n-1)}.
$$
We claim that one can choose \(q\) so that
$$
r<q<p^*
$$
and the embedding
$$
E:L^q(\partial\Omega_\gamma,w_\gamma)\hookrightarrow L^r(\partial\Omega_\gamma)
$$
is bounded.

Indeed, by H\"older's inequality, for every \(f\in L^q(\partial\Omega_\gamma,w_\gamma)\),
\begin{align*}
\|f\|_{L^r(\partial\Omega_\gamma)}^r
&=
\int_{\partial\Omega_\gamma} |f(x)|^r\, w_\gamma(x)^{\frac rq}\, w_\gamma(x)^{-\frac rq}\,d\mathcal H^{n-1}(x)
\\
&\le
\Big(\int_{\partial\Omega_\gamma} |f(x)|^q\, w_\gamma(x)\, d\mathcal H^{n-1}(x)\Big)^{\frac rq}
\Big(\int_{\partial\Omega_\gamma} w_\gamma(x)^{-\frac r{q-r}}\, d\mathcal H^{n-1}(x)\Big)^{\frac{q-r}{q}}
\\
&=
\|f\|_{L^q(\partial\Omega_\gamma,w_\gamma)}^r
\Big(\int_{\partial\Omega_\gamma} x_n^{-\beta\frac r{q-r}}\, d\mathcal H^{n-1}(x)\Big)^{\frac{q-r}{q}}.
\end{align*}
Thus it remains to verify the integrability condition
\begin{equation}\label{eq:need-integrability-corr}
\int_{\partial\Omega_\gamma} x_n^{-\beta\frac r{q-r}}\, d\mathcal H^{n-1}(x)<\infty.
\end{equation}

We use the decomposition of \(\partial\Omega_\gamma\) into the boundary faces
\[
F_i^0 := \{x \in \partial\Omega_\gamma : x_i = 0\},
\qquad
F_i^\alpha := \{x \in \partial\Omega_\gamma : x_i = x_n^\alpha\},
\qquad
i = 1,\dots,n-1,
\]
together with \(F^t:=\{x_n=1\}\).

The contribution of \(F^t\) is finite. On each flat face \(F_i^0\), using its parametrization, we obtain
\[
\int_{F_i^0} x_n^{-\beta\frac r{q-r}}\, d\mathcal H^{n-1}
=
\int_0^1 t^{\alpha(n-2)-\beta\frac r{q-r}}\,dt.
\]
On each slanted face \(F_i^\alpha\), again by the parametrization and the area formula,
\[
\int_{F_i^\alpha} x_n^{-\beta\frac r{q-r}}\, d\mathcal H^{n-1}
=
\int_0^1 t^{\alpha(n-2)-\beta\frac r{q-r}}
\sqrt{1+\alpha^2 t^{2\alpha-2}}\,dt
\le
C\int_0^1 t^{\alpha(n-2)-\beta\frac r{q-r}}\,dt,
\]
where \(C>0\) depends only on \(n\) and \(\alpha\).
Therefore \eqref{eq:need-integrability-corr} is satisfied whenever
\[
\alpha(n-2)-\beta\frac r{q-r}>-1.
\]
This is equivalent to
\[
q>r\Bigl(1+\frac{\beta}{\alpha(n-2)+1}\Bigr).
\]

Now
\[
\alpha(n-2)+1=\frac{1+(n-2)\gamma}{n-1},
\]
hence
\[
\frac{p^*(\alpha(n-2)+1)}{\alpha(n-2)+1+\beta}
=
\frac{p\bigl(1+(n-2)\gamma\bigr)}{(\gamma-p)(n-1)}.
\]
Therefore the assumption
\[
r<\frac{p\bigl(1+(n-2)\gamma\bigr)}{(\gamma-p)(n-1)}
\]
is equivalent to
\[
r\Bigl(1+\frac{\beta}{\alpha(n-2)+1}\Bigr)<p^*.
\]
Hence one can choose \(q\) such that
\[
r\Bigl(1+\frac{\beta}{\alpha(n-2)+1}\Bigr)<q<p^*.
\]
For this choice of \(q\), the embedding
\[
E:L^q(\partial\Omega_\gamma,w_\gamma)\hookrightarrow L^r(\partial\Omega_\gamma)
\]
is bounded.

Since \(T_q:W^{1,p}(\Omega_\gamma)\to L^q(\partial\Omega_\gamma,w_\gamma)\) is compact and \(E\) is bounded, their composition
\[
T=E\circ T_q:W^{1,p}(\Omega_\gamma)\to L^r(\partial\Omega_\gamma)
\]
is compact.

Finally,
\[
\frac{p\bigl(1+(n-2)\gamma\bigr)}{(\gamma-p)(n-1)}
=
\frac{p(d(\gamma))}{n-p},
\]
where
\[
d(\gamma)=\frac{(n-p)(\gamma n - 2\gamma + 1)}{(\gamma-p)(n-1)}.
\]
This completes the proof.
\end{proof}

\subsection{Comparison with the previous results}

In \cite{GV}, it was proved that in the case of outward $\gamma$-cuspidal domains
$$
    \widetilde{\Omega}_{\gamma} = \{x=(x_1, \dots, x_n) \in \mathbb{R}^n: \sqrt{x_1^2+\dots+x_{n-1}^2} < x_n^{\alpha}, 0<x_n<1\}, \quad \alpha = \frac{\gamma-1}{n-1},
$$
the trace operator
$$
\widetilde{T}: W^{1,p}(\widetilde{\Omega}_{\gamma}) \to L^p(\partial\widetilde{\Omega}_{\gamma}, \widetilde{w}_{\gamma})
$$
is compact with  the weight function $\widetilde{w}_{\gamma} = x_n^{\alpha}$.

With this in mind, Theorem~\ref{main} of the present work implies that the trace operator
$$
    T: W^{1,p}(\Omega_\gamma) \to L^q(\partial\Omega_\gamma, w_\gamma)
$$
is compact with the weight function $w_{\gamma}(x) = x_n^\beta$, $\beta:=\frac{(\gamma - n)(1+p(n-2))}{(n-p)(n-1)}$.

Let us show that the compactness of the trace operator $T$ implies the compactness of the trace operator 
$\widetilde{T}$. Since outward $\gamma$-cuspidal domains $\widetilde{\Omega}_{\gamma}$ and $\Omega_\gamma$ are bi-Lipschitz equivalent, it is sufficient to prove that, for a suitable \(q\), there is a bounded embedding 
$$
L^q(\partial\Omega_\gamma, w_\gamma)\hookrightarrow L^p(\partial\Omega_\gamma, \widetilde{w}_\gamma).
$$

Let \(q>p\). Using the H\"older inequality, we obtain
\begin{multline*}
    \|f\|_{L^p(\partial\Omega_\gamma,\widetilde w)}^p
    = \int_{\partial\Omega_\gamma} |f|^p x_n^\alpha\,d\mathcal H^{n-1} \\
    \leq \Bigl(\int_{\partial\Omega_\gamma} |f|^q x_n^{\beta}\,d\mathcal H^{n-1}\Bigr)^{p/q}
    \Bigl(\int_{\partial\Omega_\gamma} x_n^{\bigl(\alpha-\frac{p}{q}\beta\bigr)\frac{q}{q-p}}\,d\mathcal H^{n-1}\Bigr)^{\frac{q-p}{q}}.
\end{multline*}

Hence it remains to verify the finiteness of
$$
\int_{\partial\Omega_\gamma} x_n^{\theta}\,d\mathcal H^{n-1},
\qquad
\theta:=\Bigl(\alpha-\frac{p}{q}\beta\Bigr)\frac{q}{q-p}.
$$

Using the parametrizations of the boundary faces \(F_i^0\), \(F_i^\alpha\), and \(F^t\) as before,
we see that this integral is finite if and only if
$$
\theta+\alpha(n-2)>-1.
$$
A direct computation shows that this condition is equivalent to
$$
q>\frac{p(n-1)(\gamma-p)}{\gamma(n-p)}.
$$

Since
$$
\frac{p(n-1)(\gamma-p)}{\gamma(n-p)}
<
\frac{p(n-1)}{n-p},
$$
we can choose \(q\) such that
$$
\frac{p(n-1)(\gamma-p)}{\gamma(n-p)}<q<\frac{p(n-1)}{n-p}.
$$
For this choice of \(q\), the embedding
$$
L^q(\partial\Omega_\gamma, w_\gamma)\hookrightarrow L^p(\partial\Omega_\gamma, \widetilde{w}_\gamma)
$$
is bounded.

Thus Theorem~\ref{main} implies the trace embedding result of \cite{GV}.

\subsection{Sharpness of trace embeddings with power weights}

In this subsection we consider trace embedding operators with power weights
\[
w_\theta(x):=x_n^\theta,
\qquad x=(x',x_n)\in \partial\Omega_\gamma.
\]
Recall the notation
\[
\alpha=\frac{\gamma-1}{n-1},
\qquad
p^*=\frac{p(n-1)}{n-p},
\qquad
\beta=\frac{(\gamma-n)(1+p(n-2))}{(n-p)(n-1)}.
\]

The next theorem provides a necessary condition on the power exponent~$\theta$ for the boundedness of the trace embedding into power-weighted Lebesgue spaces.

\begin{thm}
\label{prop:necessary-power-weight}
Let 
\[
T:W^{1,p}(\Omega_\gamma)\to L^q(\partial\Omega_\gamma,x_n^\theta),
\]
with \(1<p<n\) and \(1<q<\infty\), be a bounded trace embedding operator. 
Then the exponent \(\theta\) satisfies
\begin{equation}\label{eq:theta-necessary-general}
\theta \ge \frac{q}{p}\bigl(\alpha(n-1)+1-p\bigr)-\alpha(n-2)-1.
\end{equation}
In particular, for the critical exponent \(q=p^*=\frac{p(n-1)}{n-p}\), one has
\begin{equation}\label{eq:theta-necessary-critical}
\theta\ge \beta.
\end{equation}
\end{thm}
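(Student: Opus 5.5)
Testing boundedness on a family of functions concentrated near the vertex of the cusp forces the inequality on $\theta$. For $0<\varepsilon<1/2$ take the cut-off
\[
u_\varepsilon(x)=\eta(x_n/\varepsilon),
\]
where $\eta\in C^\infty(\mathbb R)$, $\eta=1$ on $(-\infty,1]$, $\eta=0$ on $[2,\infty)$ and $0\le\eta\le1$. Then $u_\varepsilon\in W^{1,p}(\Omega_\gamma)$, it equals $1$ on $\{x_n<\varepsilon\}$ and vanishes for $x_n>2\varepsilon$, and $|\nabla u_\varepsilon|\le C/\varepsilon$ on $\{\varepsilon<x_n<2\varepsilon\}$. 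Boundedness of $T$ gives
\[
\|u_\varepsilon\|_{L^q(\partial\Omega_\gamma,x_n^\theta)}\le \|T\|\,\|u_\varepsilon\|_{W^{1,p}(\Omega_\gamma)}
\]
for all $\varepsilon$. Comparing the powers of $\varepsilon$ on both sides as $\varepsilon\to0$ yields the claim.

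\textbf{Volume side.} The cross-section of $\Omega_\gamma$ at height $t$ is a cube of side $t^\alpha$, so it has $(n-1)$-measure $t^{\alpha(n-1)}$. Hence
\[
\|\nabla u_\varepsilon\|^p_{L^p}\le C\varepsilon^{-p}\int_\varepsilon^{2\varepsilon}t^{\alpha(n-1)}\,dt\simeq \varepsilon^{\alpha(n-1)+1-p},
\qquad
\|u_\varepsilon\|_{L^p}^p\le C\varepsilon^{\alpha(n-1)+1}.
\]
The second term is of lower order for small $\varepsilon$, so $\|u_\varepsilon\|_{W^{1,p}}\lesssim\varepsilon^{(\alpha(n-1)+1-p)/p}$.

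\textbf{Boundary side.} This uses the face parametrizations from the proofs of Theorem~\ref{main} and Corollary~\ref{cor:unweighted-trace}. On a flat face $F_i^0$ the slice at height $t$ has $(n-2)$-measure $t^{\alpha(n-2)}$, so
\[
\int_{\partial\Omega_\gamma}|u_\varepsilon|^q x_n^\theta\,d\mathcal H^{n-1}\ge\int_{F_1^0\cap\{x_n<\varepsilon\}}x_n^\theta\,d\mathcal H^{n-1}=\int_0^\varepsilon t^{\theta+\alpha(n-2)}\,dt.
\]
This integral equals $+\infty$ if $\theta+\alpha(n-2)\le-1$. In that case boundedness already fails, and the required inequality holds trivially, or that case is simply excluded. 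Otherwise the integral equals $c\,\varepsilon^{\theta+\alpha(n-2)+1}$, so $\|u_\varepsilon\|_{L^q(\partial\Omega_\gamma,x_n^\theta)}\gtrsim\varepsilon^{(\theta+\alpha(n-2)+1)/q}$.

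\textbf{Conclusion.} Combining the two sides,
\[
\varepsilon^{(\theta+\alpha(n-2)+1)/q}\lesssim\varepsilon^{(\alpha(n-1)+1-p)/p}
\]
for all small $\varepsilon$. Letting $\varepsilon\to0$ forces
\[
\frac{\theta+\alpha(n-2)+1}{q}\ge\frac{\alpha(n-1)+1-p}{p},
\]
which is exactly \eqref{eq:theta-necessary-general}.

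\textbf{Critical case.} For \eqref{eq:theta-necessary-critical}, substitute $q=p^*$ and $\alpha(n-1)=\gamma-1$ to get
\[
\theta\ge\frac{(n-1)(\gamma-p)}{n-p}-\frac{(\gamma-1)(n-2)}{n-1}-1.
\]
It remains to put the right-hand side over the common denominator $(n-p)(n-1)$ and check that it equals $\beta$. The numerator works out to $(n-1)^2(\gamma-p)-(\gamma-1)(n-2)(n-p)-(n-p)(n-1)$. Expanding and collecting the $\gamma$-terms and constant terms should give $(\gamma-n)(1+p(n-2))$. This is routine algebra, and it is the step where a slip is most likely, so I would double-check it, for instance by taking $\gamma=n$, where both sides must vanish.

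**Main obstacle.** The only delicate points are two. First, the lower-order $L^p$ term must not dominate; it does not, since its exponent is larger by $p$. Second, the boundary lower bound must hold; it does, because restricting to one flat face only decreases the integral.
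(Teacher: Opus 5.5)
Your argument is correct and is essentially the paper's proof: the same cut-off $u_\varepsilon=\eta(x_n/\varepsilon)$, the same volume estimate $\|u_\varepsilon\|_{W^{1,p}(\Omega_\gamma)}\lesssim\varepsilon^{(\alpha(n-1)+1-p)/p}$, and the same lower bound from a flat face $F_i^0$. The paper also records the slanted faces, which is redundant, as you note; and because $u_\varepsilon\in W^{1,p}(\Omega_\gamma)$, boundedness of $T$ itself rules out the case $\theta+\alpha(n-2)\le-1$. The critical-case algebra you left to check does work out: the numerator's $\gamma$-coefficient is $(n-1)^2-(n-2)(n-p)=1+p(n-2)$ and its constant term is $-n(1+p(n-2))$, giving exactly $\beta$.
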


\begin{proof}
Recall the decomposition of \(\partial\Omega_\gamma\) into the boundary faces
\[
F_i^0 := \{x \in \partial\Omega_\gamma : x_i = 0\},
\qquad
F_i^\alpha := \{x \in \partial\Omega_\gamma : x_i = x_n^\alpha\},
\qquad
i = 1,\dots,n-1,
\]
together with \(F^t:=\{x_n=1\}\) which is irrelevant for the argument.

Fix a cut-off function \(\eta\in C^\infty([0,\infty))\), \(0\le \eta\le 1\), such that  
\(\eta\equiv 1\) on \([0,1]\) and \(\eta\equiv 0\) on \([2,\infty)\). 
For every \(0<\varepsilon<\tfrac12\) define the test function
\[
u_\varepsilon(x):=\eta\!\left(\frac{x_n}{\varepsilon}\right),
\qquad x\in\Omega_\gamma,
\]
and denote \(f_\varepsilon:=T(u_\varepsilon)\).
Then \(u_\varepsilon\in W^{1,p}(\Omega_\gamma)\), and  
\(f_\varepsilon\equiv 1\) on \(\partial\Omega_\gamma\cap\{0<x_n<\varepsilon\}\).

Since the trace operator 
\[
T:W^{1,p}(\Omega_\gamma)\to L^q(\partial\Omega_\gamma,x_n^\theta),
\]
is bounded, there exists a constant \(C_{p,q}<\infty\) such that
\begin{equation}\label{eq:trace-ineq-test-power}
\|f_\varepsilon\|_{L^q(\partial\Omega_\gamma,x_n^\theta)}
\le
C_{p,q}\|u_\varepsilon\|_{W^{1,p}(\Omega_\gamma)},
\qquad \text{for all } 0<\varepsilon<\frac{1}{2}.
\end{equation}

We begin by estimating the left-hand side of \eqref{eq:trace-ineq-test-power} from below.

Consider boundary faces \(F_i^0\), \(i=1,\dots,n-1\). Fix a number \(i\in\{1,\dots,n-1\}\) and parametrize \(F_i^0\) by
\[
\Phi_i^0(\hat x,t)
=
(x_1,\dots,x_{i-1},0,x_{i+1},\dots,x_{n-1},t),\,\, 0<x_j<t^\alpha\,\,\text{if}\,\,j\neq i,\,\, 0<t<1,
\]
where \(\hat x=(x_1,\dots,x_{i-1},x_{i+1},\dots,x_{n-1})\).

Using the parametrization of \(F_i^0\), the surface measure satisfies  
\[
d\mathcal H^{n-1} = t^{\alpha(n-2)}\, d\hat x\, dt.
\]
Since \(f_\varepsilon\equiv 1\) on \(\partial\Omega_\gamma\cap\{0<x_n<\varepsilon\}\), we obtain
\begin{multline}
\|f_\varepsilon\|_{L^q(\partial\Omega_\gamma,x_n^\theta)}^q
\ge
\int_{F_i^0\cap\{0<x_n<\varepsilon\}} x_n^\theta\, d\mathcal H^{n-1}
\\
=
\int_0^\varepsilon t^\theta \!\!\int_{(0,t^\alpha)^{n-2}} t^{\alpha(n-2)}\, d\hat x\, dt
=
\int_0^\varepsilon t^{\theta+\alpha(n-2)}\, dt < \infty,
\label{eq:LHS-lower-flat}
\end{multline}
if \(\theta+\alpha(n-2)>-1\).

Next consider boundary faces \(F_i^\alpha\), \(i=1,\dots,n-1\). 
Parameterizing \(F_i^\alpha\) by
\[
\Phi_i^\alpha(\hat x,t)
=
(x_1,\dots,x_{i-1},t^\alpha,x_{i+1},\dots,x_{n-1},t),
\qquad
0 < x_j < t^\alpha \ \text{if } j\neq i,\ \ 0<t<1,
\]
the surface measure satisfies
\[
d\mathcal H^{n-1}
=
\sqrt{1+\alpha^2 t^{2\alpha-2}}\; t^{\alpha(n-2)}\, d\hat x\, dt.
\]

Since \(f_\varepsilon\equiv 1\) on \(\partial\Omega_\gamma\cap\{0<x_n<\varepsilon\}\), we obtain
\begin{multline}
\|f_\varepsilon\|_{L^q(\partial\Omega_\gamma,x_n^\theta)}^q
\ge
\int_{F_i^\alpha\cap\{0<x_n<\varepsilon\}} x_n^\theta\, d\mathcal H^{n-1}
\\
=
\int_0^\varepsilon t^\theta \!\!\int_{(0,t^\alpha)^{n-2}}
\sqrt{1+\alpha^2 t^{2\alpha-2}}\; t^{\alpha(n-2)}\, d\hat x\, dt
\\
\ge
\int_0^\varepsilon t^\theta \!\!\int_{(0,t^\alpha)^{n-2}} t^{\alpha(n-2)}\, d\hat x\, dt
=
\int_0^\varepsilon t^{\theta+\alpha(n-2)}\, dt < \infty,
\label{eq:LHS-lower-slanted}
\end{multline}
if \(\theta+\alpha(n-2)>-1\).

Hence
\begin{equation}
\label{eq:LHS-eps-power}
\|f_\varepsilon\|_{L^q(\partial\Omega_\gamma,x_n^\theta)}
\ge
C\,\varepsilon^{(\theta+\alpha(n-2)+1)/q},
\qquad \text{if } \theta+\alpha(n-2)>-1.
\end{equation}

Next we estimate the right-hand side of \eqref{eq:trace-ineq-test-power}. 
Since \(u_\varepsilon\) depends only on \(x_n\), its gradient satisfies
\[
\nabla u_\varepsilon(x)=0 
\quad\text{for } x_n\notin(\varepsilon,2\varepsilon),
\qquad
|\nabla u_\varepsilon(x)|\le C\,\varepsilon^{-1}
\quad\text{for } x_n\in(\varepsilon,2\varepsilon).
\]

Hence
\[
\|u_\varepsilon\|_{L^p(\Omega_\gamma)}^p
\le
\int_{\Omega_\gamma\cap\{0<x_n<2\varepsilon\}} 1\,dx
\le
\int_0^{2\varepsilon} t^{\alpha(n-1)}\,dt
\le
C\,\varepsilon^{\alpha(n-1)+1},
\]
and
\[
\|\nabla u_\varepsilon\|_{L^p(\Omega_\gamma)}^p
\le
\int_{\Omega_\gamma\cap\{\varepsilon<x_n<2\varepsilon\}} \varepsilon^{-p}\,dx
\le
\varepsilon^{-p}\int_\varepsilon^{2\varepsilon} t^{\alpha(n-1)}\,dt
\le
C\,\varepsilon^{\alpha(n-1)+1-p}.
\]
Therefore,
\begin{align}
\|u_\varepsilon\|_{W^{1,p}(\Omega_\gamma)}
&=
\|\nabla u_\varepsilon\|_{L^p(\Omega_\gamma)}
+
\|u_\varepsilon\|_{L^p(\Omega_\gamma)} \notag\\
&\le
C\,\varepsilon^{(\alpha(n-1)+1-p)/p}
+
C\,\varepsilon^{(\alpha(n-1)+1)/p}
\notag\\
&\le
C\,\varepsilon^{(\alpha(n-1)+1-p)/p},
\label{eq:RHS-eps2-power}
\end{align}
where a constant \(C\) depends on \(n\) and \(\alpha\) only.

Substituting estimates \eqref{eq:LHS-eps-power} and \eqref{eq:RHS-eps2-power} 
in the inequality \eqref{eq:trace-ineq-test-power} we obtain
\[
\varepsilon^{(\theta+\alpha(n-2)+1)/q}
\le
C\,\varepsilon^{(\alpha(n-1)+1-p)/p}
\qquad \text{for all } 0<\varepsilon<\tfrac12.
\]

Hence
\[
\frac{\theta+\alpha(n-2)+1}{q}
\ge
\frac{\alpha(n-1)+1-p}{p},
\]
and so
\[
\theta
\ge
\frac{q}{p}\bigl(\alpha(n-1)+1-p\bigr)-\alpha(n-2)-1.
\]
The inequality \eqref{eq:theta-necessary-general} is proved.

Finally, substituting \(q=p^*=\frac{p(n-1)}{\,n-p\,}\) into
\eqref{eq:theta-necessary-general}, we obtain
\[
\theta \ge \beta
=
\frac{(\gamma-n)\bigl(1+p(n-2)\bigr)}{(n-p)(n-1)}.
\]

\end{proof}

\section{Weighted Schr\"odinger--Steklov $(p,q)$-eigenvalue problem}

Let $\Omega_{\gamma} \subset \R^n$ be an outward cuspidal domain defined by (\ref{cusp}), and let the weight function $w_{\gamma}$ be given by (\ref{weight}). Using the compactness of the Sobolev trace operator we consider the non-reduced weighted Schr\"odinger--Steklov $(p,q)$-eigenvalue problem, for $1 < p < n$ and $p<q<\tfrac{p(n-1)}{n-p}$,
\begin{equation}
\label{pqwSteklov}
\begin{cases}
-\mathrm{div}(|\nabla u|^{p-2}\nabla u) + |u|^{p-2}u = 0 & \text{in } \Omega_{\gamma},\\
|\nabla u|^{p-2} \nabla u \cdot \nu = \lambda\, w_{\gamma}\,\|u\|^{p-q}_{L^{q}(\partial \Omega_{\gamma}, w_{\gamma})}\, |u|^{q-2}u & \text{on } \partial\Omega_{\gamma}.
\end{cases}
\end{equation}

Define $(\lambda,u)\in \mathbb{R} \times (W^{1,p}(\Omega_{\gamma})\setminus\{0\})$ 
as an eigenpair of \eqref{pqwSteklov} if for every $v\in W^{1,p}(\Omega_{\gamma})$ we have
\begin{equation}
\label{weak}
\int_{\Omega_{\gamma}} |\nabla u|^{p-2} \nabla u \cdot \nabla v\,dx
+ \int_{\Omega_{\gamma}} |u|^{p-2} u\, v\,dx
= \lambda \|u\|^{p-q}_{L^q(\partial\Omega_{\gamma}, w_{\gamma})}\int_{\partial \Omega_{\gamma}} |u|^{q-2} u\, v\, w_{\gamma}\,d\mathcal H^{n-1}(x).
\end{equation}
We refer to $\lambda$ as an eigenvalue and $u$ as an eigenfunction of \eqref{pqwSteklov} corresponding to the eigenvalue $\lambda$. Denote by
\[
\Sigma_{p,q}(\Omega_\gamma,w_\gamma) =
\left\{
\lambda\in\mathbb R:
\exists\,u\in W^{1,p}(\Omega_\gamma)\setminus\{0\}
\text{ such that }(\lambda,u)\text{ is an eigenpair of \eqref{pqwSteklov}}
\right\}
\]
the set of eigenvalues of problem \eqref{pqwSteklov} and denote by
\[
\lambda_{p,q} = \lambda_{p,q}(\Omega_\gamma,w_\gamma) = \inf \Sigma_{p,q}(\Omega_\gamma,w_\gamma)
\]
the least (first) eigenvalue of problem \eqref{pqwSteklov}.

Note also, that the equation \eqref{pqwSteklov} represents the Euler-Lagrange equation corresponding, in its weak formulation \eqref{weak}, to the functional
$$
E(v) = \|\nabla v\|_{L^p(\Omega_{\gamma})}^p + \|v\|_{L^p(\Omega_{\gamma})}^p,
$$
restricted to the set
$$
S = \left\{ u \in W^{1,p}(\Omega_{\gamma}) : \|u\|_{L^q(\partial \Omega_{\gamma},w_{\gamma})} = 1 \right\}.
$$

The following theorem shows that the least eigenvalue
\(\lambda_{p,q}\) of problem \eqref{pqwSteklov} is positive, is attained
by an eigenpair, and admits a variational characterization as the minimum
of the corresponding Rayleigh quotient.

\begin{thm}
\label{thm:first-eigenvalue}
Let $\Omega_{\gamma}\subset\mathbb{R}^n$ be an outward cuspidal domain defined by \eqref{cusp}, and let the weight $w_{\gamma}$ be given by \eqref{weight}. Assume that $1<p<n$,$p<q<\frac{p(n-1)}{n-p}$.
Then the least eigenvalue \(\lambda_{p,q}\) is attained, i.e. there exists
\[
u_{p,q}\in W^{1,p}(\Omega_{\gamma})\setminus\{0\}
\]
such that \((\lambda_{p,q},u_{p,q})\) is an eigenpair of \eqref{pqwSteklov}.
In particular, \(\lambda_{p,q}>0\).

Moreover,
\begin{equation}\label{eq:Rayleigh}
\lambda_{p,q}
=
\min_{\substack{u\in W^{1,p}(\Omega_{\gamma})\\ Tu\not\equiv 0}}
\frac{\displaystyle \int_{\Omega_\gamma}\bigl(|\nabla u|^{p}+|u|^{p}\bigr)\,dx}
{\|u\|_{L^{q}(\partial\Omega_{\gamma},w_{\gamma})}^{\,p}}.
\end{equation}
\end{thm}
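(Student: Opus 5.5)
The plan is the direct method of the calculus of variations on the constraint set $S$, followed by a Lagrange multiplier argument and a comparison with arbitrary eigenpairs. Set
\[
\mu:=\inf\Bigl\{E(v): v\in W^{1,p}(\Omega_\gamma),\ \|v\|_{L^q(\partial\Omega_\gamma,w_\gamma)}=1\Bigr\},
\]
which, by $p$-homogeneity of both numerator and $\|v\|^p_{L^q(\partial\Omega_\gamma,w_\gamma)}$, equals the infimum of the Rayleigh quotient in \eqref{eq:Rayleigh}.

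\textbf{Step 1: $\mu>0$.} By the continuity part of Theorem~\ref{main}, $\|Tv\|_{L^q(\partial\Omega_\gamma,w_\gamma)}\le C\|v\|_{W^{1,p}(\Omega_\gamma)}$. Also $\|v\|_{W^{1,p}}^p\le 2^{p-1}E(v)$. Hence $E(v)\ge c>0$ on $S$, so $\mu>0$. The set $S$ is nonempty, since a smooth function equal to $1$ near $x_n=1$ has nonzero weighted trace.

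\textbf{Step 2: existence of a minimizer.} Take a minimizing sequence $u_k\in S$ with $E(u_k)\to\mu$. It is bounded in $W^{1,p}(\Omega_\gamma)$, which is reflexive by Lemma~\ref{Xuthm}, so a subsequence converges weakly to some $u$. By the compactness part of Theorem~\ref{main}, valid since $q<p(n-1)/(n-p)$, we get $Tu_k\to Tu$ strongly in $L^q(\partial\Omega_\gamma,w_\gamma)$. Therefore $\|u\|_{L^q(\partial\Omega_\gamma,w_\gamma)}=1$, i.e.\ $u\in S$. Weak lower semicontinuity of the convex continuous functional $E$ gives $E(u)\le\mu$, so $E(u)=\mu$. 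This is the step where compactness is essential, and I expect it to be the main obstacle. Without compactness the constraint could be lost in the weak limit through concentration at the cusp vertex, which is exactly what the weight $w_\gamma$ prevents.

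\textbf{Step 3: Euler--Lagrange equation.} For $v\in W^{1,p}$, consider the function $\varepsilon\mapsto R(u+\varepsilon v)$, where $R$ is the Rayleigh quotient. Since $u$ is a minimizer, its derivative at $\varepsilon=0$ vanishes. We use that $E$ is $C^1$ with derivative $p\int(|\nabla u|^{p-2}\nabla u\cdot\nabla v+|u|^{p-2}uv)$. We also use that $N(u)=\|u\|^p_{L^q(\partial\Omega_\gamma,w_\gamma)}$ is $C^1$ near $u\neq0$, with derivative
\[
p\|u\|^{p-q}\int_{\partial\Omega_\gamma}|u|^{q-2}uv\,w_\gamma\,d\mathcal H^{n-1},
\]
well defined by Hölder's inequality and continuity of $T$. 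The vanishing derivative, together with $N(u)=1$, yields exactly \eqref{weak} with $\lambda=\mu$. Hence $\mu\in\Sigma_{p,q}$, and $\lambda_{p,q}\le\mu$.

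\textbf{Step 4: conversely, $\lambda\ge\mu$ for every eigenpair.} Let $(\lambda,u)$ be any eigenpair and test \eqref{weak} with $v=u$. This gives
\[
E(u)=\lambda\|u\|^{p-q}\|u\|^q=\lambda\|u\|^p_{L^q(\partial\Omega_\gamma,w_\gamma)}.
\]
If $Tu\equiv0$, then $E(u)=0$, forcing $u=0$, which contradicts $u\neq0$. So $Tu\not\equiv0$, and the identity reads $\lambda=R(u)\ge\mu$. Thus $\lambda_{p,q}=\mu>0$, the value is attained by $u_{p,q}=u$ from Step 2, and \eqref{eq:Rayleigh} holds as a minimum.

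A minor point is the convention $\|u\|^{p-q}$ when $Tu=0$. This is avoided by the observation in Step 4 that the trace of any eigenfunction is nontrivial.
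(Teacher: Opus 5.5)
Your proposal is correct and follows essentially the paper's proof: the direct method on $S$ (reflexivity plus compactness of $T$ from Theorem~\ref{main}), an Euler--Lagrange equation at the minimizer, and testing an arbitrary eigenpair with $v=u$ to get $\lambda\ge$ the minimum of the Rayleigh quotient. The only cosmetic difference is that you differentiate the Rayleigh quotient directly, which identifies the eigenvalue as the infimum at once. The paper instead applies the abstract Lagrange multiplier rule to $E$ on $\{B=1\}$ and then identifies the multiplier $\mu q/p$ with $E(u_*)$ by testing with $u_*$.
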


\begin{proof}
Consider two functionals
\[
E(u)=\int_{\Omega_\gamma} \bigl(|\nabla u|^p+|u|^p\bigr)\,dx,
\qquad
B(u)=\int_{\partial\Omega}|u|^q\,w_\gamma\,d\mathcal H^{n-1}
=
\|Tu\|_{L^q(\partial\Omega_\gamma,w_\gamma)}^q.
\]
The functional \(B\) is well defined by the continuity of the trace operator
\[
T:W^{1,p}(\Omega_\gamma)\to L^q(\partial\Omega_\gamma,w_\gamma).
\]

Consider the constraint set
\[
S=
\{u\in W^{1,p}(\Omega_\gamma): B(u)=1\}
=
\{u\in W^{1,p}(\Omega_\gamma): \|Tu\|_{L^q(\partial\Omega_\gamma,w_\gamma)}=1\}.
\]

For every \(u\in W^{1,p}(\Omega_\gamma)\) with \(Tu\not\equiv 0\), define
\[
\widetilde u=\frac{u}{\|u\|_{L^q(\partial\Omega_\gamma,w_\gamma)}}.
\]
Then \(\widetilde u\in S\), and by the \(p\)-homogeneity of \(E\),
\[
E(\widetilde u)
=
\frac{E(u)}{\|u\|_{L^q(\partial\Omega_\gamma,w_\gamma)}^p}.
\]
Therefore
\[
\inf_{\substack{u\in W^{1,p}(\Omega_\gamma)\\ Tu\not\equiv 0}}
\frac{E(u)}{\|u\|_{L^q(\partial\Omega_\gamma,w_\gamma)}^p}
=
\inf_{v\in S} E(v).
\]

Since \(1<p<\infty\), the map \(\xi\mapsto |\xi|^p\)
are \(C^1\) and convex. Hence the functional $E$
is \(C^1\), strictly convex, and weakly lower semicontinuous on
\(W^{1,p}(\Omega_\gamma)\). Moreover,
\[
E(u)=\|\nabla u\|_{L^p(\Omega_\gamma)}^p+\|u\|_{L^p(\Omega_\gamma)}^p
\ge 2^{1-p}\|u\|_{W^{1,p}(\Omega_\gamma)}^p,
\]
so \(E\) is coercive. 

If \(u_k\rightharpoonup u\)
weakly in \(W^{1,p}(\Omega_\gamma)\), then by compactness of the trace operator,
\[
Tu_k\to Tu
\qquad\text{strongly in } L^q(\partial\Omega_\gamma,w_\gamma),
\]
hence \(B(u_k)\to B(u)\). Thus \(S\) is weakly closed in
\(W^{1,p}(\Omega_\gamma)\).

By the direct method of the calculus of variations, there exists
\(u_*\in S\) such that infimum for $E$ is attained
\[
E(u_*)=\inf_{v\in S}E(v).
\]

The G\^ateaux derivative of \(E\) is given by
\[
\langle E'(u),v\rangle
=
p\int_{\Omega_\gamma}
\bigl(
|\nabla u|^{p-2}\nabla u\cdot\nabla v
+
|u|^{p-2}uv
\bigr)\,dx,
\qquad u,v\in W^{1,p}(\Omega_\gamma),
\]
and the G\^ateaux derivative of \(B\) is
\[
\langle B'(u),v\rangle
=
q\int_{\partial\Omega_\gamma}|u|^{q-2}u\,v\,w_\gamma\,d\mathcal H^{n-1},
\qquad u,v\in W^{1,p}(\Omega_\gamma).
\]
Since
\[
\langle B'(u_*),u_*\rangle = q B(u_*) = q \neq 0,
\]
the Lagrange multiplier rule applies. Hence there exists \(\mu\in\mathbb R\)
such that
\[
\langle E'(u_*),v\rangle=\mu\,\langle B'(u_*),v\rangle
\qquad\text{for all } v \in W^{1,p}(\Omega_\gamma).
\]

Taking \(v=u_*\), we obtain
\[
pE(u_*)=\mu q\,B(u_*)=\mu q,
\]
because \(u_*\in S\). 
Therefore,
\begin{equation}\label{eq:weak-normalized}
\int_{\Omega_\gamma}
\bigl( |\nabla u_*|^{p-2}\nabla u_*\cdot\nabla v + |u_*|^{p-2}u_*v
\bigr)\,dx
=
\frac{\mu q}{p}
\int_{\partial\Omega_\gamma}|u_*|^{q-2}u_*\,v\,w_\gamma\,d\mathcal H^{n-1}
\end{equation}
for all \(v\in W^{1,p}(\Omega_\gamma)\).

Hence \eqref{eq:weak-normalized} coincides with \eqref{weak}, so
\((\frac{\mu q}{p},u_*)\) is an eigenpair of \eqref{pqwSteklov}. In particular,
\(\frac{\mu q}{p}\in\Sigma_{p,q}(\Omega_\gamma,w_\gamma)\), and therefore
\[
\lambda_{p,q}
=
\inf\Sigma_{p,q}(\Omega_\gamma,w_\gamma)
\le \frac{\mu q}{p}.
\]

Conversely, let \(\lambda\in\Sigma_{p,q}(\Omega_\gamma,w_\gamma)\), and let
\(u\in W^{1,p}(\Omega_\gamma)\setminus\{0\}\) be a corresponding eigenfunction.
Taking \(v=u\) in \eqref{weak}, we obtain
\[
\int_{\Omega_\gamma}\bigl(|\nabla u|^p+|u|^p\bigr)\,dx
=
\lambda\,\|u\|_{L^q(\partial\Omega_\gamma,w_\gamma)}^{p-q}
\int_{\partial\Omega_\gamma}|u|^q\,w_\gamma\,d\mathcal H^{n-1}
=
\lambda\,\|u\|_{L^q(\partial\Omega_\gamma,w_\gamma)}^{p}.
\]
Since \(u\neq 0\), the left-hand side is strictly positive. Hence
\(\|u\|_{L^q(\partial\Omega_\gamma,w_\gamma)}>0\) and
\[
\lambda = \frac{E(u)}{\|u\|_{L^q(\partial\Omega_\gamma,w_\gamma)}^p}
\ge \frac{\mu q}{p}.
\]
As this holds for every \(\lambda\in\Sigma_{p,q}(\Omega_\gamma,w_\gamma)\), we get
\[
\lambda_{p,q}
=
\inf\Sigma_{p,q}(\Omega_\gamma,w_\gamma)
\ge \frac{\mu q}{p}.
\]
Combining the two inequalities, we conclude that
\[
\lambda_{p,q}=\frac{\mu q}{p}.
\]
Thus \((\lambda_{p,q},u_*)\) is an eigenpair, and \eqref{eq:Rayleigh} follows.
Setting \(u_{p,q}:=u_*\), the proof is complete.
\end{proof}

The variational characterization of the least eigenvalue obtained in
Theorem~\ref{thm:first-eigenvalue} immediately identifies it with the reciprocal
\(p\)-th power of the optimal weighted trace constant. In the cuspidal setting,
this constant can be estimated explicitly by combining the factorization of the
trace operator from Theorem~\ref{main} with the corresponding trace inequality
on the model Lipschitz domain \(\Omega_n\).

\begin{thm}[Optimal trace constant]\label{thm:trace-constant}
Under the assumptions of Theorem~\ref{thm:first-eigenvalue}, let
\(C_{\mathrm{tr}}(\Omega_\gamma)\) be the best constant in the weighted trace inequality
\begin{equation}\label{eq:trace-ineq}
  \|Tu\|_{L^q(\partial\Omega_\gamma,w_\gamma)}
  \leq
  C_{\mathrm{tr}}(\Omega_\gamma)
  \Biggl(\int_{\Omega_\gamma} \bigl(|\nabla u|^p + |u|^p\bigr)\,dx\Biggr)^{1/p},
  \qquad u\in W^{1,p}(\Omega_\gamma).
\end{equation}
Then
\begin{equation}\label{eq:best-constant}
  \lambda_{p,q}(\Omega_\gamma,w_\gamma)^{-1/p}
  =
  C_{\mathrm{tr}}(\Omega_\gamma).
\end{equation}

Moreover, in the cuspidal domain \(\Omega_\gamma\), the following estimate holds:
\begin{equation}\label{eq:ctr-cusp-estimate-explicit}
C_{\mathrm{tr}}(\Omega_\gamma)
\le
\left(\frac{n-p}{\gamma-p}\right)^{\frac{1}{q}-\frac{1}{p}}
\left((n-1)+\frac{(n-p)^2}{(\gamma-p)^2}
+\frac{(p-1)^2(\gamma-n)^2}{(\gamma-p)^2(n-1)}\right)^{1/2}
\,C_{\mathrm{tr}}(\Omega_n).
\end{equation}
\end{thm}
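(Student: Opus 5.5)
The identity \eqref{eq:best-constant} will come straight from Theorem~\ref{thm:first-eigenvalue}. By \eqref{eq:Rayleigh}, $\lambda_{p,q}=\min E(u)/\|Tu\|^p_{L^q(\partial\Omega_\gamma,w_\gamma)}$ over $Tu\not\equiv 0$, where $E(u)=\int_{\Omega_\gamma}(|\nabla u|^p+|u|^p)\,dx$. Hence $\|Tu\|_{L^q(\partial\Omega_\gamma,w_\gamma)}\le \lambda_{p,q}^{-1/p}E(u)^{1/p}$ for all $u$; when $Tu\equiv 0$ this is trivial. Equality holds at the minimizer $u_{p,q}$, so the best constant is exactly $\lambda_{p,q}^{-1/p}$. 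Positivity of $\lambda_{p,q}$ makes this finite.

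For \eqref{eq:ctr-cusp-estimate-explicit} I will follow the diagram \eqref{Diag} with the extremal parameter $a=\frac{n-p}{\gamma-p}$ and keep track of constants. Fix $u\in W^{1,p}(\Omega_\gamma)$ and set $v=u\circ\varphi_a\in W^{1,p}(\Omega_n)$. The argument has three steps.

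\emph{Boundary step.} Since $w_{\gamma,a}=w_\gamma$ for this $a$, the lower bound in \eqref{eq:tang-jac-bounds}, $J(x,\varphi_a^{-1}|_{\partial\Omega_\gamma})\ge \frac1a w_\gamma(x)$, together with the area formula \eqref{eq:area-boundary}, gives
$$\int_{\partial\Omega_\gamma}|Tu|^q w_\gamma\,d\mathcal H^{n-1}\le a\int_{\partial\Omega_n}|Tv|^q\,d\mathcal H^{n-1},$$
that is, $\|Tu\|_{L^q(\partial\Omega_\gamma,w_\gamma)}\le a^{1/q}\|Tv\|_{L^q(\partial\Omega_n)}$.

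\emph{Model-domain step.} Apply the trace inequality on $\Omega_n$: $\|Tv\|_{L^q(\partial\Omega_n)}\le C_{\mathrm{tr}}(\Omega_n)E_{\Omega_n}(v)^{1/p}$.

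\emph{Energy step.} I will show $E_{\Omega_n}(v)\le a^{-1}M^pE(u)$, where
$$M^2=(n-1)\bigl((a\alpha-1)^2+1\bigr)+a^2 .$$
For the gradient term, use $|\nabla v|\le|D\varphi_a||\nabla u\circ\varphi_a|$, write $|D\varphi_a|^p=\frac{|D\varphi_a|^p}{J}\,J$, and apply the change of variables \eqref{chvf}. For this $a$ the exponent $a-1-\frac{a\gamma-n}{p}$ vanishes, so $|D\varphi_a|^p/J\le a^{-1}M^p$ pointwise, and hence $\int_{\Omega_n}|\nabla v|^p\le a^{-1}M^p\int_{\Omega_\gamma}|\nabla u|^p$. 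For the zero-order term, $\int_{\Omega_n}|v|^p\,dy=\int_{\Omega_\gamma}|u|^p J(x,\varphi_a^{-1})\,dx\le a^{-1}\int_{\Omega_\gamma}|u|^p$, because $x_n^{n/a-\gamma}\le1$ (as $a\le n/\gamma$). Since $M^2\ge n-1\ge1$, both terms are bounded by $a^{-1}M^pE(u)$.

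Combining the three steps gives
$$\|Tu\|_{L^q(\partial\Omega_\gamma,w_\gamma)}\le a^{\frac1q-\frac1p}\,M\,C_{\mathrm{tr}}(\Omega_n)\,E(u)^{1/p}.$$
It remains to evaluate $M$. A short computation gives $a\alpha-1=\frac{(1-p)(\gamma-n)}{(\gamma-p)(n-1)}$, so
$$(n-1)(a\alpha-1)^2=\frac{(p-1)^2(\gamma-n)^2}{(\gamma-p)^2(n-1)},$$
and $M$ is exactly the square-root factor in \eqref{eq:ctr-cusp-estimate-explicit}.

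The main obstacle I expect is justifying that the trace commutes with composition, i.e. $T(u\circ\varphi_a)=(Tu)\circ\varphi_a$ on $\partial\Omega_n$. The map $\varphi_a$ is only locally bi-Lipschitz and degenerates at the vertex. I would first prove the identity for $u\in C(\overline{\Omega}_\gamma)\cap W^{1,p}(\Omega_\gamma)$, where it is obvious, and then pass to the limit. This uses density of such functions, which holds since $\Omega_\gamma$ is Lipschitz away from the vertex and the vertex has zero capacity, together with continuity of both trace operators and of $\varphi_a^\ast$.
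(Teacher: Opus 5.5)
Your proposal is correct and follows the paper's proof essentially step by step. Both derive $\lambda_{p,q}^{-1/p}=C_{\mathrm{tr}}(\Omega_\gamma)$ from the Rayleigh characterization, factor the trace through $\Omega_n$ with $a=\frac{n-p}{\gamma-p}$, get the factor $a^{1/q}$ from the lower tangential Jacobian bound, get $a^{-1}M^p$ from the vanishing exponent in $K_{p,p}$ together with $\|J(\cdot,\varphi_a^{-1})\|_{L^\infty}\le a^{-1}$, and evaluate $a\alpha-1$ in the same way. Your differences are minor: you combine the two energy terms directly rather than through a maximum, and you add an explicit density argument for $T(u\circ\varphi_a)=(Tu)\circ\varphi_a$, which the paper simply takes from the diagram.
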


\begin{proof}
By Theorem~\ref{thm:first-eigenvalue},
\[
\lambda_{p,q}(\Omega_\gamma,w_\gamma)
=
\inf_{\substack{u\in W^{1,p}(\Omega_\gamma)\\ Tu\not\equiv 0}}
\frac{\int_{\Omega_\gamma}\bigl(|\nabla u|^p+|u|^p\bigr)\,dx}{\|Tu\|_{L^q(\partial\Omega_\gamma,w_\gamma)}^p}.
\]
which immediately proves \eqref{eq:best-constant}.

Now let \(\varphi:\overline{\Omega}_n\to\overline{\Omega}_\gamma\) be
\[
\varphi(y)
=
(y_1 y_n^{a\alpha - 1}, \dots, y_{n-1} y_n^{a\alpha - 1}, y_n^a),
\qquad
\alpha=\frac{\gamma-1}{n-1},
\qquad
a=\frac{n-p}{\gamma-p}.
\]
By the anticommutative diagram from Theorem~\ref{main}, the weighted trace operator
on \(\Omega_\gamma\) factorizes through the Lipschitz domain \(\Omega_n\):
\[
T_{\Omega_\gamma}
=
(\varphi^{-1})^* \circ T_{\Omega_n}\circ \varphi^*,
\]
where
\[
\varphi^*:W^{1,p}(\Omega_\gamma)\to W^{1,p}(\Omega_n),
\qquad
(\varphi^{-1})^*:L^q(\partial\Omega_n)\to L^q(\partial\Omega_\gamma,w_\gamma).
\]

From the lower bound in \eqref{eq:tang-jac-bounds},
\[
\frac1a\,w_\gamma(x)\le J(x,\varphi^{-1}|_{\partial\Omega_\gamma}),
\]
hence for every \(g\in L^q(\partial\Omega_n)\),
\begin{align*}
\|(\varphi^{-1})^*g\|_{L^q(\partial\Omega_\gamma,w_\gamma)}^q
&=
\int_{\partial\Omega_\gamma}|g\circ\varphi^{-1}|^q\,w_\gamma\,d\mathcal H^{n-1}\\
&\le
a\int_{\partial\Omega_\gamma}|g\circ\varphi^{-1}|^q
J(x,\varphi^{-1}|_{\partial\Omega_\gamma})\,d\mathcal H^{n-1}(x)\\
&=
a\int_{\partial\Omega_n}|g(y)|^q\,d\mathcal H^{n-1}(y).
\end{align*}
Therefore,
\[
\|(\varphi^{-1})^*\|\le a^{1/q}.
\]

Further, by Theorem~\ref{CompTh},
\[
\|\nabla(u\circ\varphi)\|_{L^p(\Omega_n)}
\le
K_p(\varphi;\Omega_n)\,\|\nabla u\|_{L^p(\Omega_\gamma)},
\]
while the change-of-variables formula and the boundedness of \(J(\cdot,\varphi^{-1})\) give
\[
\|u\circ\varphi\|_{L^p(\Omega_n)}
\le
\|J(\cdot,\varphi^{-1})\|_{L^\infty(\Omega_\gamma)}^{1/p}
\|u\|_{L^p(\Omega_\gamma)}.
\]
Hence
\begin{multline*}
\Biggl(\int_{\Omega_n}\bigl(|\nabla(u\circ\varphi)|^p+|u\circ\varphi|^p\bigr)\,dy\Biggr)^{1/p}\\
\le
\max\Bigl\{K_p(\varphi;\Omega_n),\,
\|J(\cdot,\varphi^{-1})\|_{L^\infty(\Omega_\gamma)}^{1/p}\Bigr\}
\Biggl(\int_{\Omega_\gamma}\bigl(|\nabla u|^p+|u|^p\bigr)\,dx\Biggr)^{1/p}.
\end{multline*}

Therefore, for every \(u\in W^{1,p}(\Omega_\gamma)\),
\begin{multline*}
\|T_{\Omega_\gamma}u\|_{L^q(\partial\Omega_\gamma,w_\gamma)}=
\|(\varphi^{-1})^*\bigl(T_{\Omega_n}(u\circ\varphi)\bigr)\|_{L^q(\partial\Omega_\gamma,w_\gamma)} \le
a^{1/q}\,\|T_{\Omega_n}(u\circ\varphi)\|_{L^q(\partial\Omega_n)}\\
\le a^{1/q}\,C_{\mathrm{tr}}(\Omega_n)
\Biggl(\int_{\Omega_n}\bigl(|\nabla(u\circ\varphi)|^p+|u\circ\varphi|^p\bigr)\,dy\Biggr)^{1/p}\\
\le a^{1/q} \max\Bigl\{K_p(\varphi;\Omega_n),\,
\|J(\cdot,\varphi^{-1})\|_{L^\infty(\Omega_\gamma)}^{1/p}\Bigr\}
C_{\mathrm{tr}}(\Omega_n)\times\\
\times\Biggl(\int_{\Omega_\gamma}\bigl(|\nabla u|^p+|u|^p\bigr)\,dx\Biggr)^{1/p}.
\end{multline*}
Consequently,
\[
C_{\mathrm{tr}}(\Omega_\gamma)
\le
a^{1/q}
\max\Bigl\{K_p(\varphi;\Omega_n),\,
\|J(\cdot,\varphi^{-1})\|_{L^\infty(\Omega_\gamma)}^{1/p}\Bigr\}
\,C_{\mathrm{tr}}(\Omega_n).
\]

By the direct computations from the proof of Theorem~\ref{main},
\[
K_p(\varphi;\Omega_n)
\le
a^{-1/p}
\Bigl((n-1)\bigl((a\alpha-1)^2+1\bigr)+a^2\Bigr)^{1/2},
\qquad
J(x,\varphi^{-1})=\frac1a\,x_n^{\,n/a-\gamma}.
\]
Since
\[
\frac{n}{a}-\gamma
=
\frac{n(\gamma-p)}{n-p}-\gamma
=
\frac{p(\gamma-n)}{n-p}>0,
\]
we obtain
\[
\|J(\cdot,\varphi^{-1})\|_{L^\infty(\Omega_\gamma)}^{1/p}
=
a^{-1/p}.
\]
Hence
\begin{align*}
\max\Bigl\{K_p(\varphi;\Omega_n),\,
\|J(\cdot,\varphi^{-1})\|_{L^\infty(\Omega_\gamma)}^{1/p}\Bigr\}
&\le
a^{-1/p}
\max\left\{
\Bigl((n-1)\bigl((a\alpha-1)^2+1\bigr)+a^2\Bigr)^{1/2},\,1
\right\}\\
&=
a^{-1/p}
\Bigl((n-1)\bigl((a\alpha-1)^2+1\bigr)+a^2\Bigr)^{1/2}.
\end{align*}
Therefore,
\[
C_{\mathrm{tr}}(\Omega_\gamma)
\le
a^{\frac1q-\frac1p}
\Bigl((n-1)\bigl((a\alpha-1)^2+1\bigr)+a^2\Bigr)^{1/2}
\,C_{\mathrm{tr}}(\Omega_n).
\]

Finally, since
\[
a=\frac{n-p}{\gamma-p},
\qquad
\alpha=\frac{\gamma-1}{n-1},
\]
we have
\[
a\alpha-1
=
\frac{(n-p)(\gamma-1)}{(\gamma-p)(n-1)}-1
=
-\frac{(p-1)(\gamma-n)}{(\gamma-p)(n-1)}.
\]
Substituting this identity and the value of \(a\) into the previous estimate,
we obtain \eqref{eq:ctr-cusp-estimate-explicit}.
\end{proof}

\vskip 0.3cm

Alexander Menovschikov; Department of Mathematics, HSE University, Moscow, Russia
 
\emph{E-mail address:} \email{menovschikovmath@gmail.com} \\

Alexander Ukhlov; Department of Mathematics, Ben-Gurion University of the Negev, P.O.Box 653, Beer Sheva, 8410501, Israel 
							
\emph{E-mail address:} \email{ukhlov@math.bgu.ac.il}

\end{document}